\documentclass[11pt]{article}
\usepackage[T1]{fontenc}
\usepackage[utf8]{inputenc}
\usepackage[greek,english]{babel}

\headheight 0in
\headsep 0in
\evensidemargin 0in
\oddsidemargin \evensidemargin
\textwidth 6.5in
\setlength {\marginparwidth }{2cm}
\topmargin .25in
\textheight 8.8in

\usepackage[style=alphabetic, maxnames=4]{biblatex}
\addbibresource{Bibliography.bib}
\usepackage{microtype}
\usepackage{enumitem}
\setenumerate{label=\textup{(\roman*)}}

\usepackage{amsmath, mathtools}
\usepackage{amssymb}
\usepackage{amsthm, thmtools}
\usepackage{tikz-cd}
\usepackage{braket}
\usepackage{bm}
\usepackage{xspace}

\usetikzlibrary{positioning}
\usepackage{alphabeta}
\DeclarePairedDelimiter\abs{\lvert}{\rvert}
\usepackage[frozencache]{minted}
\usemintedstyle{tango}
\setminted[julia]{frame=lines,
rulecolor=\color{white!80!black},
fontsize=\small,
numbers=right,
numbersep=-5pt,
obeytabs=true,
encoding = utf8,
tabsize=4}

\usepackage{todonotes}
\usepackage{csquotes}

\usepackage[colorlinks]{hyperref}
\usepackage[nameinlink]{cleveref}
\hypersetup{
  linkcolor=[rgb]{0.3,0.3,0.6},
  citecolor=[rgb]{0.2, 0.6, 0.2},
  urlcolor=[rgb]{0.6, 0.2, 0.2}
}

\declaretheorem[numberwithin=section]{problem}
\declaretheorem[numberwithin=section]{theorem}
\declaretheorem[numberlike=theorem]{lemma, corollary,proposition}
\declaretheorem[numberlike=theorem,style=definition]{definition}

\declaretheorem[numberlike=theorem,style=remark]{remark}
\declaretheoremstyle[
notefont=\bfseries, notebraces={(}{)}
]{conjecture}

\newcommand{\ie}{i.\,e.\ }

\newcommand{\numberset}{\mathbb}

\newcommand{\C}{\mathbb{C}}

\newcommand{\N}{\numberset{N}}

\newcommand{\R}{\numberset{R}}
\newcommand{\K}{\mathbb{K}}
\newcommand{\PP}{\mathbb{P}}

\definecolor{cof}{RGB}{219,144,71}
\definecolor{pur}{RGB}{186,146,162}
\definecolor{greeo}{RGB}{91,173,69}
\definecolor{greet}{RGB}{52,111,72}
\definecolor{darkspringgreen}{rgb}{0.09, 0.45, 0.27}

\newcommand{\discup}{\mathbin{\dot\cup}}

\DeclareMathOperator{\Gr}{Gr}

\DeclareMathOperator{\Ker}{Ker}
\DeclareMathOperator{\Diag}{Diag}
\DeclareMathOperator{\diag}{diag}
\DeclareMathOperator{\GL}{GL}

\DeclareMathOperator{\SL}{SL}
\DeclareMathOperator{\PGL}{PGL}
\DeclareMathOperator{\Sym}{Sym}
\DeclareMathOperator{\SO}{SO}
\DeclareMathOperator{\Skew}{Skew}

\newcommand{\Id}{I}
\newcommand{\transpose}{\textup{\textsf{T}}}
\DeclareMathOperator{\proj}{\mathbb{P}}

\title{Mukai lifting of self-dual points in \texorpdfstring{$\PP^6$}{P6}}
\author{Barbara Betti and Leonie Kayser}
\date{}

\newcommand\blankfootnote[1]{%
  \begin{NoHyper}
  \renewcommand\thefootnote{}\footnote{#1}%
  \addtocounter{footnote}{-1}%
  \end{NoHyper}
}

\begin{document}

\maketitle

\begin{abstract}
A set of $2n$ points in $\PP^{n-1}$ is self-dual if it is invariant under the Gale transform. Motivated by Mukai's work on canonical curves, Petrakiev showed that a general self-dual set of $14$ points in $\PP^6$ arises as the intersection of the Grassmannian $\Gr(2,6)$ in its Plücker embedding in $\PP^{14}$ with a linear space of dimension $6$. In this paper, we focus on the inverse problem of recovering such a linear space associated to a general self-dual set of points. We use numerical homotopy continuation to approach the problem and implement an algorithm in \texttt{Julia} to solve it. Along the way, we also implement the forward problem of slicing Grassmannians and use it to experimentally study the real solutions to this problem.
\end{abstract}

\section{Introduction}

\blankfootnote{\textit{Keywords.} Mukai Grassmannian, self-dual points, numerical algebraic geometry, homotopy continuation}\blankfootnote{\textit{2020 Mathematics Subject Classification.} 13H10, 14D22, 14N05, 65H14}In 1988 Mukai proved that the intersection of the Grassmannian $\Gr(2,6)$ in its Plücker embedding in $\PP^{14}$ with a general linear space of dimension $7$ is a general curve of genus $8$ in its canonical embedding \cite{MUKAI1988357, mukai1992curves}. This is part of his more general structure results concerning canonical curves and K3 surfaces: Mukai established that the general K3 surface of genus $g\leq 10$ is a linear section of what is now called the \emph{Mukai Grassmannians} $X_g$. In the case of genus $g=8$ the Mukai Grassmannian is the classical Grassmannian $X_8 = \Gr(2,6)$ in its Plücker embedding. Prime Fano $3$-folds, K3 surfaces, canonical curves and self-dual point configurations arise as the intersection of  $\Gr(2,6)$ with a linear subspace of $\PP^{14}$ of dimension $9,8,7$ and $6$, respectively. The \emph{Mukai lifting problem}, introduced in \cite[Section 4]{Geiger} as a "formidable challenge", concerns the inverse task, that is, it deals with recovering a linear section of the Grassmannian that produces a given variety. 

In 2009 Petrakiev showed that a general self-dual point configuration of $14$ points in $\PP^6$ arises as the intersection of $\Gr(2,6)$ with a linear space of dimension $6$ in $\PP^{14}$ \cite[Thm.\ 3.3, \Cref{thm:petrakiev} below]{Petrakiev}. 
Our paper presents an algorithm that solves the corresponding lifting problem.
\begin{problem} \label{Problem: Mukai lift}
Given such a set $\Gamma\subseteq \PP^6$, find a linear embedding $L\colon \PP^6 \overset{{\sim}}{\to} \mathbb{L}\subseteq\PP^{14}$ such that 
\begin{equation} \label{eq: Mukai_lift_P6}
      L(\Gamma) = \mathbb{L} \cap \Gr(2,6). 
\end{equation}
\end{problem}

The existence of a solution is guaranteed by the result of Petrakiev. However, the proof is not constructive. Obtaining a linear section is a challenging problem that involves hard polynomial systems which have not been successfully solved before.
Our implementation relies on numerical homotopy continuation to solve equations, as well as exploiting the toric degeneration of the Grassmannian $\Gr(2,6)$. A similar approach is followed in a more general setting in \cite{DBLP:journals/moc/BurrSW23}, but we avoid using the defining ideal of the Grassmannian. Instead we directly degenerate its coordinate ring by using a finite Sagbi basis. Such a basis is the analogue of Gröbner bases for subalgebras of polynomial rings. 

The name self-dual stems from the defining property of these configurations being invariant under the \emph{Gale transform}. This transform is an involution that takes a tuple $\Gamma$  of $m$ points in $\PP^{n-1}$ into a tuple $\Gamma'$ of $m$ points in $\proj^{m-(n-1)}$, defined up to projective linear transformations. The Gale transform was first studied in the general case of $m$ points in $\PP^{n-1}$ by Coble \cite{Coble}, while some specific cases of self-dual points have been studied earlier. A first example is \emph{Pascal's Theorem} stating that the vertices of two triangles circumscribed to the same conics lie on another conic. Hesse gave a similar result for $8$ points in $3$-space. The transform is named after David Gale, who used it in his studies on convex polyhedra, we refer to \cite{EISENBUD2000127} for a more detailed history of the Gale transform. Applications of Gale transforms are present in coding theory, where the Gale transform of a linear code gives the parity-check matrix, and in solving polynomial systems, where upper bounds for the number of solutions are given by studying the \emph{Gale dual system}, see \cite[Chapter 6]{sottile2011real}. Some more recent results involving Gale duality can be found in \cite{arzhantsev2017gale, Bihan2008, CAMINATA2018653}.

In this work we turn abstract and classical algebraic geometry into practice using state-of-the-art numerical tools. It is a first step towards solving more challenging lifting problems, for example lifting canonical curves. Our main contribution, other than the algorithmic implementation, regards new normal forms for self-dual points and the parameterization of their moduli space. We also experimentally study the real geometry of the slicing problem.

The paper is organized as follows.
In \Cref{A primer of self-dual points} we give equivalent characterizations for self-dual points and briefly introduce the moduli spaces of point configurations and the Gale transform. We discuss normal forms of self-dual points and how to parametrize them generically. We finally recall Mukai's results and collect known descriptions of the moduli space of self-dual points $\mathcal{A}_n$ for small $n$.
In \Cref{Slcing using HC} we introduce the homotopy continuation method and use it to compute the points of in given linear section of the Grassmannian $\Gr(2,6)$, exploiting its Gröbner degeneration. We also experimentally analyze how many real solutions there exist.
In \Cref{Lifting using HC} we finally describe a set of polynomial equations representing the Mukai lifting problem and solve them for a general configuration of points using Homotopy Continuation.
\Cref{Implementation} describes the implementation of our algorithm as well as an extended example and performance results. The full code is implemented in the programming language \texttt{Julia} \cite{bezanson2012julia} version $1.9.3$ and it is available at the MathRepo page
\[
\texttt{\url{https://mathrepo.mis.mpg.de/MukaiLiftP6}}
\]

\section{A primer of self-dual points} \label{A primer of self-dual points}

In this section we give an elementary introduction to the theory of self-dual points. Our main goal is to define the moduli space $\mathcal{A}_n$ of self-dual points in $\PP^n$ and give parametrizations thereof. Our main contribution is the skew normal form of a sufficiently general set of self-dual points, which, together with the orthogonal normal form, give two useful normal forms when working with self-dual points. While the treatment is non self-contained, all details relevant for our algorithmic considerations are given.

\subsection{Self-dual point configurations}

We consider an algebraically closed field $\K$ of any characteristic, in the later sections we will specialize to $\K = \C$. Let $S = \K[X_1,\dots,X_n]$ be the homogeneous coordinate ring of $\PP^{n-1} = \PP(\K^n)$ and let $I(X)$ be the homogeneous ideal of a given set $X \subseteq \PP^{n-1}$. Recall that a set of points $\Gamma \subseteq \PP^{n-1}$ is (linearly) non-degenerate if they are not contained in a hyperplane, namely, $I(\Gamma)_1 = 0$. A set of points is linearly general if any subset of at most $n$ points is linearly independent. Let $\Diag(n) \subseteq \GL(n)$ be the set of invertible diagonal matrices and let $\Gamma  \subseteq  \PP(\K^n)$ be a  non-degenerate ordered set of $2n$ distinct points, represented by the columns of a $n\times 2n$ matrix. 

\begin{lemma}\label{lem:selfdual_characterization}
The following are equivalent for $\Gamma$:
\begin{enumerate}
\item All subsets of $2n-1$ points impose the same number of conditions on quadrics as $\Gamma$, in symbols $I(\Gamma)_2 = I(\Gamma\setminus\{\gamma\})_2$ for all $\gamma \in \Gamma$;
\item There exists an invertible diagonal matrix $\Lambda\in \Diag(2n)$ such that $\Gamma\cdot \Lambda \cdot \Gamma^\transpose = \bm{0}$;
\item There exists a partition $\Gamma = \Gamma_1 \discup \Gamma_2$ and a non-degenerate symmetric bilinear form $Q \in \Sym(n)$ such that both $\Gamma_i$ are orthogonal bases with respect to $Q$, \ie $\Gamma_i^\transpose \cdot Q \cdot \Gamma_i \in \Diag(n)$.
\end{enumerate}
If $\Gamma$ is linearly general, these are equivalent to
\begin{enumerate}[resume]
\item[\textup{(iii')}] For any partition $\Gamma = \Gamma_1 \discup \Gamma_2$ into $n+n$ points there is a non-degenerate bilinear form $Q \in \Sym(n)$ such that both $\Gamma_i$ are orthogonal bases with respect to $Q$.
\end{enumerate}
\end{lemma}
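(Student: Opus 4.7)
The plan is to establish the chain (i) $\Leftrightarrow$ (ii) $\Leftrightarrow$ (iii) by direct linear algebra on symmetric matrices, and then to deduce (iii) $\Leftrightarrow$ (iii') from the linearly general hypothesis.

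For (i) $\Leftrightarrow$ (ii), the key move is to identify the dual space $S_2^*$ with $\Sym(n)$ via the trace pairing: under this identification the evaluation functional $\mathrm{ev}_j\colon q \mapsto q(\gamma_j)$ corresponds to the rank-one symmetric matrix $\gamma_j\gamma_j^\transpose$. Since $I(\Gamma)_2 = \bigcap_j \ker(\mathrm{ev}_j)$, condition (i) says exactly that every $\mathrm{ev}_j$ lies in the span of the remaining $\mathrm{ev}_i$. A hyperplane avoidance argument over the infinite field $\K$ then produces a single linear dependence $\sum_j \lambda_j\, \gamma_j\gamma_j^\transpose = 0$ with all $\lambda_j \neq 0$, because the space of dependencies is not covered by the finitely many proper hyperplanes $\{\lambda_j = 0\}$. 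Such a dependence is precisely $\Gamma \Lambda \Gamma^\transpose = 0$ with $\Lambda = \diag(\lambda_1,\ldots,\lambda_{2n})\in \Diag(2n)$ invertible, which is (ii); the converse reverses the same steps.

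For (ii) $\Leftrightarrow$ (iii) I would work with block identities. Given (iii), set $D_i = \Gamma_i^\transpose Q \Gamma_i \in \Diag(n)$; both $\Gamma_i$ and $Q$ are invertible, so $Q^{-1} = \Gamma_i D_i^{-1} \Gamma_i^\transpose$ for $i=1,2$. Equating these two expressions for $Q^{-1}$ gives $\Gamma_1 D_1^{-1} \Gamma_1^\transpose - \Gamma_2 D_2^{-1} \Gamma_2^\transpose = 0$, which in block form is (ii) with $\Lambda = \diag(D_1^{-1}, -D_2^{-1})$. Conversely, given (ii), choose a partition $I_1 \discup I_2 = [2n]$ such that both $\Gamma_{I_1}$ and $\Gamma_{I_2}$ are bases, and set $Q = (\Gamma_{I_1}^\transpose)^{-1} \Lambda_{I_1}^{-1} \Gamma_{I_1}^{-1}$. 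By construction $\Gamma_{I_1}^\transpose Q \Gamma_{I_1} = \Lambda_{I_1}^{-1}$ is diagonal, and the block identity $\Gamma_{I_1}\Lambda_{I_1}\Gamma_{I_1}^\transpose = -\Gamma_{I_2}\Lambda_{I_2}\Gamma_{I_2}^\transpose$ coming from (ii) forces $\Gamma_{I_2}^\transpose Q \Gamma_{I_2} = -\Lambda_{I_2}^{-1}$, again diagonal. The implication (iii') $\Rightarrow$ (iii) is trivial, and under the linearly general assumption every $n$-subset of $\Gamma$ is a basis, so the same construction applied to any chosen partition yields (iii) $\Rightarrow$ (iii').

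The step I expect to require the most care is the existence of a partition into two bases in (ii) $\Rightarrow$ (iii) when $\Gamma$ is not linearly general. I would handle this via the Nash-Williams criterion, which reduces the existence of two disjoint bases to the numerical condition $|X| \leq 2\,\mathrm{rk}(X)$ for all $X \subseteq [2n]$; this inequality should be extractable from (ii) by restricting the identity $\Gamma\Lambda\Gamma^\transpose = 0$ to the quotient by $\mathrm{span}\{\gamma_i : i \in X\}$ and counting the columns available on the complement, mirroring the toy calculation that forbids $n+1$ collinear points. Under the linearly general hypothesis the partition exists tautologically, so this matroid-theoretic detour is the only place where the proof has to engage with the combinatorics of $\Gamma$.
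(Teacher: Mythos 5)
Your proposal follows essentially the same route as the paper for (i)$\Leftrightarrow$(ii) — the trace pairing $S_2^*\cong\Sym(n)$ is just a coordinate-free phrasing of the paper's evaluation matrix, and the hyperplane-avoidance step is identical. Your block-algebra for (ii)$\Leftrightarrow$(iii) is also the same identity in disguise; in fact keeping $\Lambda$ general and setting $Q=\Gamma_1^{-\transpose}\Lambda_1^{-1}\Gamma_1^{-1}$ is slightly cleaner than the paper's preliminary rescaling by $\sqrt{\Lambda_{ii}}$.

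The one place you over-engineer is the partition-into-two-bases step in (ii)$\Rightarrow$(iii). You flag it correctly as needing care, but the Nash–Williams/Edmonds detour is unnecessary: from (ii) the existence is immediate. Since $\Gamma$ is non-degenerate, some $n$-subset $\Gamma_1$ of the columns is a basis; the block identity $\Gamma_1\Lambda_1\Gamma_1^\transpose=-\Gamma_2\Lambda_2\Gamma_2^\transpose$ then shows that $\Gamma_2\Lambda_2\Gamma_2^\transpose$ is invertible (the left side is), hence $\Gamma_2$ has full rank $n$ and is also a basis. No matroid theory is needed — the self-duality relation automatically makes the complement of any basis a basis. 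Your proposed extraction of the inequality $|X|\leq 2\,\mathrm{rk}(X)$ from (ii) does in fact work (pass to the quotient by $\mathrm{span}\{\gamma_i:i\in X\}$, pick an invertible block among the surviving columns, and compare ranks as you would in the collinear-points toy case), so the route you sketch is not wrong, but it is a genuine detour that you would still have to carry out in detail, whereas the paper's observation makes the whole issue evaporate in one line.
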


\begin{proof}
\begin{itemize}[wide]
\item[(ii)$\Rightarrow$(i)]
Let $A = \set{\alpha \in \N^n | \abs{\alpha} = 2}$, for a point $x \in \PP(\K^n)$ define $x^A \coloneqq (x^\alpha)_{\alpha \in A}$ as the row vector of monomials $X^\alpha$ evaluated in $z$. Then the linear map $E \colon\K[X_1,\dots,X_n]_2 \to \K^{2n}$ given by evaluation of quadrics in $\Gamma$ is represented by
\[
E = \begin{bmatrix}
    (\gamma_i)^\alpha
\end{bmatrix}_{1\leq i\leq 2n,\abs \alpha = 2} = \begin{bmatrix}
- (\gamma_1)^A - \\
\vdots \\
- (\gamma_{2n})^A -
\end{bmatrix}, \qquad \Gamma = [\gamma_1|\dots|\gamma_{2n}].
\]
Its kernel $\Ker E \subseteq \K[X_1,\dots,X_n]_2$ consists of quadrics passing through $\Gamma$. We need to show that the rank of $E$ does not change when deleting any of the $2n$ rows from $E$. For a point $\gamma$, the vector $\gamma^A$ consists of the upper half of the matrix $\gamma\cdot \gamma^\transpose$, and by (ii) we have $\sum_i \Lambda_{ii} \gamma_i\gamma_i^\transpose = \bm 0$. By assumption all $\Lambda_{ii}$ are nonzero, so any row of $E$ is a linear combination of the other rows, proving the rank condition.
\item[(i)$\Rightarrow$(ii)] Reversing the argument, by (i) every $\gamma_j\gamma_j^\transpose$ is a linear combination
\[
\gamma_j\gamma_j^\transpose = \sum_{i\neq j} \beta_i^{(j)}\gamma_i\gamma_i^\transpose.
\]
Thus the system of equations $\Gamma \Lambda \Gamma^\transpose = \bm 0$, linear in diagonal matrices $\Lambda$, has solutions $\diag(\beta^{(j)})$ with $\beta_j^{(j)} = -1 \neq 0$. Since $\K$ is infinite, there exists a solution with all entries nonzero.
\item[(ii)$\Rightarrow$(iii):] After rescaling the columns by $\gamma_i \mapsto 1/\sqrt{\Lambda_{ii}} \gamma_i$ we may assume that $\Lambda = \Id_{2n}$. After reordering the columns we may assume that $\Gamma = [\Gamma_1|\Gamma_2]$ where $\Gamma_1$ is invertible. Consider the non-degenerate symmetric matrix $Q \coloneqq \Gamma_1^{-\transpose}\Gamma_1^{-1}$, then using
\[
\bm 0 = \Gamma \Gamma^\transpose = \Gamma_1\Gamma_1^\transpose + \Gamma_2\Gamma_2^\transpose
\]
we see that $\Gamma_2$ is also invertible and $Q = -\Gamma_2^{-\transpose}\Gamma_2^{-1}$, and hence $\Gamma_i^\transpose Q \Gamma_i = \pm\Id_n$.
\item[(iii)$\Rightarrow$(ii):]This argument is reversible after rescaling $\Gamma$ to match $\Gamma_i^\transpose Q \Gamma_i = \pm\Id_n$.
\item[(ii)$\Leftrightarrow$(iii'):] The same argument clearly applies to any partition if $\Gamma$ is linearly general. \qedhere 
\end{itemize}
\end{proof}

\begin{definition}
A set of $2n$ points in $\PP^{n-1}$ is \emph{self-dual} or self-associated if it satisfies any of the equivalent conditions (i),(ii),(iii) from the previous lemma.
\end{definition}
The name \enquote{self-dual} is related to the \emph{Gale transform} on the space of point-configurations, which we will introduce in the next section. Its fixed points are exactly self-dual point configurations. A deeper characterization of self-dual points related to the Gorenstein property of $S/I(\Gamma)$ is given by Eisenbud and Popescu \cite{EISENBUD2000127}. A finite-dimensional $\K$-algebra $A$ is Gorenstein if it is injective as a module over itself. An equivalent characterization is the existence of a $\K$-linear map $e\colon A \to \K$ such that $(f,g) \mapsto e(fg)$ is a perfect pairing.
A standard graded $\K$-algebra $S_X$ of Krull dimension $n$ is Gorenstein if there is a regular sequence $\ell_1,\dots,\ell_n \in (S_X)_1$ such that $S_X/\langle \ell_1,\dots,\ell_n\rangle$ is Gorenstein.

\begin{theorem}[{\cite[Theorem 7.1 \& 7.3]{EISENBUD2000127}}] \label{thm:gorenstein}
For a non-degenerate set of $2n$ points $\Gamma \subseteq \PP^{n-1}$ the following are equivalent:
\begin{enumerate}
\item $\Gamma$ is self-dual and fails by $1$ to impose independent conditions on quadrics;
\item every subset of $2n-1$ points imposes independent conditions on quadrics, but $\Gamma$ does not impose independent conditions;
\item $\Gamma$ is arithmetically Gorenstein, \ie the graded ring $S_\Gamma = S/I(\Gamma)$ is Gorenstein.
\end{enumerate}
\end{theorem}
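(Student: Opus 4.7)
All three conditions are reformulations of structural properties of the Hilbert function $h(d) := \dim(S/I(\Gamma))_d$: non-degeneracy fixes $h(0) = 1$ and $h(1) = n$, while $h(d) = 2n$ for $d \gg 0$. The entire equivalence hinges on isolating the pivot value $h(2) = 2n - 1$, or equivalently on the symmetric h-vector $(1, n-1, n-1, 1)$ of a generic artinian reduction $A = S_\Gamma/\ell S_\Gamma$ by a sufficiently generic linear form $\ell$ (which exists because $\Gamma$ is finite and non-degenerate).

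The equivalence (i) $\Leftrightarrow$ (ii) is essentially bookkeeping on \Cref{lem:selfdual_characterization}(i). The difference $\dim I(\Gamma \setminus \{\gamma\})_2 - \dim I(\Gamma)_2 \in \{0,1\}$ equals $1$ precisely when $\gamma$ imposes an independent condition on quadrics past $\Gamma \setminus \{\gamma\}$, so self-duality is the vanishing of every such difference. Combined with ``fails by $1$'' this forces $\dim I(\Gamma)_2 = \binom{n+1}{2} - (2n-1)$, hence each $(2n-1)$-subset imposes independent conditions; the converse runs the same computation backwards. To prepare the comparison with (iii), I upgrade the Hilbert function to $(1, n, 2n-1, 2n, 2n, \dots)$ by producing a separating cubic: for $\gamma, \gamma' \in \Gamma$ choose a linear form $\ell$ vanishing at $\gamma'$ but not elsewhere on $\Gamma$, and a quadric $q$ vanishing on $\Gamma \setminus \{\gamma, \gamma'\}$ but not at $\gamma'$ (possible by (ii)). Self-duality forces $q(\gamma) \neq 0$, since otherwise $q \in I(\Gamma \setminus \{\gamma'\})_2 = I(\Gamma)_2$ would contradict $q(\gamma') \neq 0$. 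Then $\ell q$ vanishes on $\Gamma \setminus \{\gamma\}$ but not at $\gamma$, yielding $h(3) = 2n$.

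For the equivalence with (iii) I reformulate Gorenstein-ness as perfectness of the socle pairings $A_d \otimes A_{s-d} \to A_s \cong \K$ for $s = 3$, which by symmetry reduces to the single nontrivial pairing $A_1 \otimes A_2 \to A_3$. For (i)/(ii) $\Rightarrow$ (iii), I would read off the Macaulay inverse system of $A$ from the normal form of \Cref{lem:selfdual_characterization}(iii): the bilinear form $Q$ and partition $\Gamma = \Gamma_1 \discup \Gamma_2$ produce an explicit degree-$3$ apolar generator $F$, and the orthogonal-basis condition $\Gamma_i^\transpose Q \Gamma_i \in \Diag(n)$ is precisely what makes $A \cong \K[Y_1, \dots, Y_{n-1}]/\mathrm{Ann}(F)$ Gorenstein. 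Conversely, Gorenstein-ness forces a symmetric h-vector, and a short case analysis---$c_0 = 1$, $c_1 = n-1$, $\sum_i c_i = 2n$, no internal zero (which would contradict generation in degree one)---leaves only $(1, n-1, n-1, 1)$, giving $h(2) = 2n - 1$, \ie ``fails by $1$''. Self-duality then follows from the Cayley-Bacharach property at level $s-1 = 2$ for arithmetically Gorenstein zero-dimensional schemes of socle degree $s = 3$, a standard consequence of perfect socle pairing.

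The main obstacle is the two-way translation between abstract Gorenstein-ness of $A$ and the concrete orthogonal-basis normal form on $\Gamma$: in one direction one must build the apolar generator $F$ from $Q$ and verify non-degeneracy of the socle pairing; in the other one needs Cayley-Bacharach machinery to convert abstract socle duality into the concrete identity $I(\Gamma)_2 = I(\Gamma \setminus \{\gamma\})_2$. The elementary parts, namely (i) $\Leftrightarrow$ (ii) and the Hilbert function computation, are straightforward; the genuinely non-trivial content of the theorem is the equivalence with the Gorenstein condition.
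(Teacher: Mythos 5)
The paper does not prove this theorem: it is quoted verbatim from Eisenbud--Popescu \cite[Thms.~7.1, 7.3]{EISENBUD2000127}, so there is no in-paper argument to compare against. Assessing your proposal on its own merits: the elementary parts are correct. The bookkeeping for (i) $\Leftrightarrow$ (ii) is sound, the separating-cubic construction correctly gives $h(3)=2n$ and hence the artinian $h$-vector $(1,n-1,n-1,1)$, and the case analysis of symmetric $h$-vectors with no internal zeros for (iii) $\Rightarrow$ (ii) is valid.

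The weak link, which you yourself flag as the ``main obstacle,'' is (ii) $\Rightarrow$ (iii). You propose to exhibit a degree-$3$ apolar generator $F$ ``read off from $Q$'' and set $A\cong \K[Y]/\operatorname{Ann}(F)$, but you never write down $F$ nor explain why $\operatorname{Ann}(F)$ equals the artinian reduction of $I(\Gamma)$. That is the genuine content of the equivalence and cannot be left as a gesture. The good news is that you have already assembled exactly the two hypotheses of the Davis--Geramita--Orecchia criterion: a reduced zero-dimensional scheme of socle degree $s$ is arithmetically Gorenstein if and only if its $h$-vector is symmetric and it has the Cayley--Bacharach property in degree $s-1$. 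In your situation $s=3$, the $h$-vector $(1,n-1,n-1,1)$ is symmetric, and condition (ii) --- equivalently, self-duality via \Cref{lem:selfdual_characterization}(i) --- \emph{is} the Cayley--Bacharach property in degree $2$. Invoking this criterion closes both directions of the equivalence with (iii) in one stroke, replaces the hand-built inverse system, and removes the asymmetry of using Cayley--Bacharach only for (iii) $\Rightarrow$ (ii) while constructing $F$ for the converse.
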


\subsection{The space of point configurations}

Following \cite[Chapter 3]{Mumford1996GIT} or \cite[Chapter II]{Dolgachev1988}, the moduli space $\mathcal{P}^m_n$ of ordered sets of $m$ points in $\PP^n$ can be constructed as a GIT quotient as follows:
\begin{itemize}
\item The product $(\PP^n)^{m}$ admits group actions of $G=\SL(n+1)$ and  $\mathfrak{S}_m$ via
\[
g \cdot (x_1,\dots,x_m) = (gx_1,\dots,gx_m), \qquad \sigma \cdot (x_1,\dots,x_m) = (x_{\sigma^{-1}(1)},\dots,x_{\sigma^{-1}(m)}).
\]
\item Consider the very ample line bundle on $(\PP^n)^{m}$
\[
\mathcal{L} \coloneqq \mathcal{O}_{(\PP^n)^{m}}(1,\dots,1) = \bigotimes_{i=1}^m \pi_i^* \mathcal{O}_{\PP^n}(1).
\]
This enables us to embed $(\PP^n)^{m}$ as a projective variety in $\PP^{(n+1)^m-1}$ with homogeneous coordinate ring $\bigoplus_{k\geq 0} H^0((\PP^n)^{m}, \mathcal{L}^{\otimes k})$. The bundle $\mathcal{L}$ linearizes with respect to the action of $\SL(n+1)$ in the sense of \cite{Mumford1996GIT}.
\item With respect to this linearized action, a tuple $Z \in (\PP^n)^{m}$ is semi-stable if and only if
\[
\dim \operatorname{Span}_{\PP^n}(Y)  +1 \geq \frac{(n+1)\cdot \#Y}{m} \qquad \text{for all }Y \subseteq Z.
\]
The tuple $Z$ is stable if and only if this inequality is always strict \cite[Chapter II Thm.\ 1]{Dolgachev1988}.
\item Consider the graded ring of invariants $R^m_n \coloneqq \bigoplus_{k\geq 0} H^0((\PP^n)^{m}, \mathcal{L}^{\otimes k})^G$, then
\[
\mathcal{P}^m_n \coloneqq\operatorname{Proj} R^m_n  =  ((\PP^n)^{m})^{\rm ss} \mathbin{/\!/\!_{\mathcal{L}}} G.
\]
\end{itemize}
The moduli space of ordered point configurations $\mathcal{P}^m_n$ is a natural domain for the Gale transformation, which we now introduce, a reference is \cite[Chapter III]{Dolgachev1988}.

\begin{definition}
The \emph{Gale transform} of $\Gamma\in \mathcal{P}_n^m$ is given by any ordered set $G(\Gamma)\in \mathcal{P}^m_{m-n-2}$ of $m$ points in $\PP^{m-n-2}$, represented by a matrix of size $(m-n-1)\times m $ such that there exists an invertible diagonal matrix $\Lambda\in{\rm Diag}(m)$ verifying
\begin{equation} \label{Gale_transform_def}
 \Gamma\cdot \Lambda\cdot G(\Gamma)^\transpose=0. \end{equation}
In other words, the \emph{Gale transform} is the map between moduli spaces of ordered set of points
\[
G\colon \mathcal{P}_n^m \longrightarrow \mathcal{P}^m_{m-n-2}, \qquad \Gamma\longmapsto G(\Gamma).
\]
\end{definition}
The Gale transform of $\Gamma$ is given by the transpose of a kernel matrix of $\Gamma$. Defining the moduli space $\mathcal{P}_n^m$ as above makes this construction well-defined for any semi-stable configuration. Assuming that $\Gamma = [\Id_{n+1}| A]$ and taking the diagonal matrix $\Lambda$ in the definition as $\Id_{m} \oplus-\Id_{m}$, it follows that the Gale transform of $\Gamma$ is given by the configuration $G(\Gamma)\in\mathcal{P}^m_{m-n-2} $ represented by the columns of the matrix  $[A^\transpose| \Id_{m-(n+1)}]$.  

\begin{proposition}
$\Gamma\in \mathcal{P}^{2n}_{n-1}$ is self-dual if and only if it is fixed under the Gale transform.
\end{proposition}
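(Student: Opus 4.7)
The plan is to prove both directions of the equivalence by directly matching the matrix characterization of self-duality in \Cref{lem:selfdual_characterization}(ii) against the defining equation of the Gale transform. The pivotal linear-algebra observation is that non-degeneracy forces the $n \times 2n$ matrix $\Gamma$ to have rank $n$, so for any invertible diagonal $\Lambda \in \Diag(2n)$ the kernel of the linear map $\Gamma \Lambda \colon \K^{2n} \to \K^n$ is exactly $n$-dimensional. Consequently, any two full-rank matrices $M_1, M_2 \in \K^{n\times 2n}$ satisfying $\Gamma \Lambda M_i^\transpose = 0$ must differ by $M_2 = N M_1$ for some $N \in \GL(n)$, and hence represent the same point of $\mathcal{P}^{2n}_{n-1}$.

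For the forward direction I would start from a witness $\Gamma \Lambda \Gamma^\transpose = \bm 0$ of self-duality, which immediately says that $\Gamma^\transpose$ spans $\ker(\Gamma\Lambda)$. Applying the observation above with $M_1 = \Gamma$ and $M_2 = G(\Gamma)$, where $G(\Gamma)$ is any non-degenerate solution of the defining equation $\Gamma \Lambda\, G(\Gamma)^\transpose = 0$, I obtain $G(\Gamma) = N \Gamma$ for some $N \in \GL(n)$, so $\Gamma$ is fixed by the Gale transform in $\mathcal{P}^{2n}_{n-1}$.

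For the converse I would assume $G(\Gamma) = \Gamma$ in $\mathcal{P}^{2n}_{n-1}$ and choose representatives so that a matrix representing $G(\Gamma)$ takes the form $M = g \Gamma D$, with $g \in \GL(n)$ absorbing the global $\PGL(n)$-action and $D \in \Diag(2n)$ absorbing the per-column projective scalings. Substituting into $\Gamma \Lambda\, M^\transpose = 0$ and cancelling the invertible $g^\transpose$ yields $\Gamma (\Lambda D) \Gamma^\transpose = \bm 0$, which is precisely condition (ii) of \Cref{lem:selfdual_characterization} with invertible diagonal $\Lambda D$. The only mildly delicate step is the bookkeeping of these two layers of projective ambiguity, but both are absorbed cleanly into $N$ and into $\Lambda D$ respectively, so no substantial obstacle arises beyond the rank count above.
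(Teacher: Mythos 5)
Your proof is correct and follows essentially the same approach as the paper, which simply notes that $\Gamma = G(\Gamma)$ in $\mathcal{P}^{2n}_{n-1}$ is a reformulation of condition (ii) of \Cref{lem:selfdual_characterization}. You have spelled out the implicit bookkeeping — the rank-nullity count showing $\ker(\Gamma\Lambda)$ is $n$-dimensional, and how the $\GL(n)\times\Diag(2n)$ ambiguity in choosing representatives cancels to give $\Gamma(\Lambda D)\Gamma^\transpose = 0$ — but the underlying argument is the same unwinding of the definition of the Gale transform.
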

\begin{proof}
Inspecting \Cref{Gale_transform_def}, we see that $\Gamma = G(\Gamma)$ in $\mathcal{P}^{2n}_{n-1}$ is a reformulation to condition (ii) of \Cref{lem:selfdual_characterization}.
\end{proof}

The characterization using the Gale transform finally leads to the definition of the moduli space of self-dual points.

\begin{definition}
The \emph{moduli space of un-ordered self-dual point configurations} in $\PP^n$ is the fixed locus of the Gale transform $G$ on $\mathcal{P}^{2n+2}_n/\mathfrak{S}_{2n+2}$. It will be denoted by $\mathcal{A}_n$.
\end{definition}

\subsection{Normal forms for self-dual points} \label{sec:normal_form}

We consider the problem of parametrizing the space of self-dual points by means of suitable normal forms. \Cref{lem:selfdual_characterization}(iii) suggests a normal form using orthogonal matrices over $\K$
\[
\SO(n) \coloneqq \set{P \in \SL(n) | P \cdot P^\transpose = \Id_n }.
\]

\begin{theorem}[Orthogonal normal form] \label{orthogonal_normal_form}
Let $\Gamma = [\gamma_1,\dots,\gamma_{2n}] \subseteq \PP(\K^n)$ be an ordered set of $2n$ points with $\gamma_1,\dots,\gamma_n$ linearly independent. Then the following hold:
\begin{enumerate}
\item $\Gamma$ is self-dual if and only if there exists a matrix $A \in \PGL(n)$ such that, after rescaling the columns of $\Gamma$, we have $A\cdot\Gamma = [\Id_n | P]$ for some $P \in \SO(n)$. 
\item Moreover, the orthogonal matrix $P$ is unique up to multiplying the columns with $\varepsilon \in \{\pm1\}^n$ such that $\prod_i \varepsilon_i = 1$.
\end{enumerate}
\end{theorem}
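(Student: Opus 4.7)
The plan is to cash in the characterization of self-duality in Lemma~\ref{lem:selfdual_characterization}(ii) and then analyze the residual rescaling freedom.

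For the reverse direction of (i), I would directly verify that if $A\Gamma = [\Id_n \mid P]$ after rescaling with $P \in \SO(n,\K)$, then
\[
[\Id_n \mid P]\,\diag(\Id_n, -\Id_n)\,[\Id_n \mid P]^\transpose = \Id_n - PP^\transpose = \bm{0},
\]
exhibiting $\Lambda = \diag(\Id_n, -\Id_n)$ as a self-duality witness in the sense of Lemma~\ref{lem:selfdual_characterization}(ii). For the forward direction, I would start from the invertible diagonal $\Lambda = \diag(\lambda_1,\dots,\lambda_{2n})$ provided by that lemma. Using algebraic closedness of $\K$, I rescale each column representative $\gamma_i \mapsto \mu_i \gamma_i$ with $\mu_i^2 = \lambda_i$ for $i \leq n$ and $\mu_i^2 = -\lambda_i$ for $i > n$, normalizing the witness to $\Lambda = \diag(\Id_n, -\Id_n)$ and rewriting $\Gamma\Lambda\Gamma^\transpose = \bm 0$ as $\Gamma_1 \Gamma_1^\transpose = \Gamma_2 \Gamma_2^\transpose$ for $\Gamma = [\Gamma_1 \mid \Gamma_2]$. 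Then $A := \Gamma_1^{-1}$ yields $A\Gamma = [\Id_n \mid P]$ with $P = \Gamma_1^{-1}\Gamma_2$, and $PP^\transpose = \Id_n$ follows by direct substitution. If $\det P = -1$, I flip the sign of any single column of $\Gamma_2$---the rescaling by $-1$ satisfies $(-1)^2 = 1$, so it preserves the chosen $\Lambda$---landing $P$ in $\SO(n, \K)$.

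For (ii), I fix the matrix $A$ from the previous step and study the remaining rescaling freedom. Any other valid block-diagonal rescaling $\diag(D_1, D_2)$ of $A\Gamma$ that still yields $[\Id_n \mid P']$ must satisfy $A\Gamma_1 D_1 = \Id_n$; since $A\Gamma_1 = \Id_n$ already, this forces $D_1 = \Id_n$. The orthogonality condition $(PD_2)(PD_2)^\transpose = \Id_n$ combined with $PP^\transpose = \Id_n$ collapses immediately to $D_2^2 = \Id_n$, so $D_2 = \diag(\varepsilon)$ with $\varepsilon \in \{\pm 1\}^n$, and $\det(PD_2) = +1$ imposes $\prod_i \varepsilon_i = 1$. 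This is exactly the stated ambiguity group.

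The main technical hurdle is the sign-correction step in (i): one must verify that flipping a single column of $\Gamma_2$ to repair $\det P$ really stays compatible with the normalization $\Lambda = \diag(\Id_n, -\Id_n)$, which works precisely because $(-1)^2 = 1$. The essential use of algebraic closedness of $\K$ enters only once, when extracting $\sqrt{\lambda_i}$ and $\sqrt{-\lambda_i}$ for the initial rescaling; all subsequent sign bookkeeping then dovetails cleanly with the symmetry group that surfaces in (ii).
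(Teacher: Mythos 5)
Your argument for part (i) tracks the paper's proof closely: rescale by $\sqrt{\pm\lambda_i}$ to normalize $\Lambda = \Id_n \oplus -\Id_n$, set $A = \Gamma_1^{-1}$, read off $P P^\transpose = \Id_n$, and repair the determinant by flipping one sign (a $\{\pm1\}$-rescaling, compatible with $\Lambda$). Apart from cosmetic differences ($A$ vs.\ $A^{-1}$; verifying the reverse implication via Lemma~\ref{lem:selfdual_characterization}(ii) rather than (iii)) this is the paper's construction.

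In part (ii), however, fixing the matrix $A$ a priori is a genuine gap: the statement quantifies existentially over $A \in \PGL(n)$, so a uniqueness argument has to allow $A$ to vary as well. If $A\,\Gamma\,\diag(D_1,D_2) = [\Id_n \mid P]$ and $A'\,\Gamma\,\diag(D_1',D_2') = [\Id_n \mid P']$, then $A' = D_1'^{-1}D_1 A$ and $P' = (D_1'^{-1}D_1)\,P\,(D_2^{-1}D_2')$. Setting $E_1 = D_1'^{-1}D_1$ and $E_2 = D_2^{-1}D_2'$ and imposing $P'{P'}^\transpose = \Id_n$ gives $P E_2^2 P^\transpose = E_1^{-2}$, which for generic $P$ forces $E_1 = \sqrt{c}\,\diag(\delta)$ and $E_2 = \sqrt{c}\,\diag(\varepsilon)$ with $\delta,\varepsilon\in\{\pm1\}^n$; then $P' = \diag(\delta)\,P\,\diag(\varepsilon)$ with $\prod_i\delta_i\prod_i\varepsilon_i = 1$ from $\det P' = 1$. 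This is a strictly larger ambiguity than the column flips $P\diag(\varepsilon)$ alone (modulo the central kernel $(\delta,\varepsilon) = \pm(\mathbf{1},\mathbf{1})$ it has order $2^{2n-2}$ rather than $2^{n-1}$), and one can check directly for $n=2$ that $[\Id_2\mid P]$ and $[\Id_2\mid\diag(-1,1)P\diag(1,-1)]$ are the same ordered configuration in $\PP^1$ while the latter is not $\pm P$. Your step ``this forces $D_1 = \Id_n$'' silently discards exactly this row-sign freedom, so the proposal does not actually establish the claimed uniqueness. For what it is worth, the paper's own one-line proof of (ii) appears to overlook the same point: it attributes the ambiguity only to the square-root choices without noting that the choices for $i\le n$ act on the rows of $P$, so the concern is as much with the statement as with your argument.
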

\begin{proof}
\begin{enumerate}[wide]
\item If $A\cdot\Gamma = [\Id_n | P]$, then $\Gamma$ is self-dual by \Cref{lem:selfdual_characterization}(iii), for the converse we provide a construction of $A$ and $P$. By \Cref{lem:selfdual_characterization}(ii)  there exists an invertible diagonal matrix $\Lambda={\rm diag}(\lambda_1,\dots,\lambda_{2n})$ such that $\Gamma\cdot \Lambda \cdot \Gamma^\transpose = 0$. Rescaling the columns, \ie multiplying $\Gamma$ on the right with $\diag(\sqrt{\lambda_1},\dots,\sqrt{\lambda_n},\sqrt{-\lambda_{n+1}},\cdots,\sqrt{-\lambda_{2n}})$, we can assume that $\Lambda = \Id_n \oplus -\Id_n$. Since the first $n$ points of $\Gamma=[\Gamma_1|\Gamma_2]$ are linearly independent, the matrix $A:=\Gamma_1$ is invertible and $A^{-1}\cdot\Gamma=[\Id_n|P]$ is self-dual with respect the same diagonal matrix as $\Gamma$. Then
\[ \begin{bmatrix}
        \Id_n & P
    \end{bmatrix}\cdot \begin{bmatrix} \Id_n & \\ & -\Id_n   \end{bmatrix}  \cdot \begin{bmatrix}
        \Id_n \\ P^\transpose
    \end{bmatrix}= 
        \Id_n-P\cdot P^\transpose = 0,
\]
which shows that $P$ is an orthogonal matrix. If $P$ has determinant $-1$, we can rescale the last column of $\Gamma$ and $P$ to obtain a special orthogonal matrix.
\item The orthogonal matrix $P$ only depends on the choice of the square roots in the rescaling with $\operatorname{diag}(\sqrt{\lambda_1},\dots,\sqrt{-\lambda_{2n}})$, and hence is unique up to signs.
\qedhere
\end{enumerate}
\end{proof}

\begin{corollary}\label{cor:SOparamDeg}
Assume that $\operatorname{char}(\K) \neq 2$, we have a surjective morphism $\SO(n) \twoheadrightarrow \mathcal{A}_{n-1}$ generically finite of degree $2^{2n-2}(2n)!$.
\end{corollary}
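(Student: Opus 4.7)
Plan. I would derive the corollary from the orthogonal normal form in three short steps.

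Step 1 (construction of $\phi$). I would define $\phi\colon \SO(n) \to \mathcal{A}_{n-1}$ by sending $P$ to the class of the ordered tuple $[\Id_n|P]$ in $\mathcal{P}^{2n}_{n-1}/\mathfrak{S}_{2n}$. This tuple is self-dual by \Cref{lem:selfdual_characterization}(iii), applied with $Q = \Id_n$ and the partition into the columns of $\Id_n$ and of $P$; hence its class lies in the Gale-fixed locus $\mathcal{A}_{n-1}$. The assignment is polynomial in the entries of $P$, so $\phi$ is a morphism of algebraic varieties.

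Step 2 (surjectivity). Given $[\Gamma] \in \mathcal{A}_{n-1}$, choose a non-degenerate matrix representative $\Gamma$. Non-degeneracy guarantees some $n$ of its columns are linearly independent; permuting them to the front, \Cref{orthogonal_normal_form}(i) provides $A \in \PGL(n)$ and $P \in \SO(n)$ with $A\cdot\Gamma = [\Id_n|P]$, so $\phi(P) = [\Gamma]$. (Here we use $\operatorname{char}(\K)\neq 2$ to extract the square roots in the normal form procedure.)

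Step 3 (degree). I would factor $\phi$ as
\[
\SO(n) \xrightarrow{\pi_1} \mathcal{P}^{2n}_{n-1} \xrightarrow{\pi_2} \mathcal{P}^{2n}_{n-1}/\mathfrak{S}_{2n} \supseteq \mathcal{A}_{n-1}
\]
and count fibers separately. Over a linearly general class $[\Id_n|P]$, \Cref{orthogonal_normal_form}(ii) identifies the fiber of $\pi_1$ with the $2^{n-1}$ sign twists $P \mapsto P\cdot\diag(\varepsilon)$ subject to $\prod_i\varepsilon_i = 1$. Over a generic class in $\mathcal{A}_{n-1}$, the fiber of $\pi_2$ is an $\mathfrak{S}_{2n}$-torsor of size $(2n)!$. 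Multiplying gives the generic degree $2^{n-1}(2n)!$.

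Main obstacle. The substantive point is that $\mathfrak{S}_{2n}$ acts generically freely on the image of $\pi_1$: a generic self-dual configuration must admit no non-trivial combinatorial automorphism in $\PGL(n)$ beyond the sign twists of (ii), so the $(2n)!$ reorderings yield $(2n)!$ genuinely distinct classes in $\mathcal{P}^{2n}_{n-1}$. While the dimension equality $\dim \SO(n) = \binom{n}{2} = \dim \mathcal{A}_{n-1}$ already forces $\phi$ to be generically finite, ruling out accidental coincidences among distinct reorderings is a separate check. The cleanest way to handle it is to exhibit one $P \in \SO(n)$ whose associated configuration is symmetry-free and then invoke upper semicontinuity of the stabilizer along the irreducible family $\SO(n) \to \mathcal{A}_{n-1}$ to propagate this to a Zariski-dense open.
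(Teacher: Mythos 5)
Your proposal follows the same route as the paper: surjectivity from the orthogonal normal form of \Cref{orthogonal_normal_form} together with semi-stability (which implies non-degeneracy, so some $n$ columns can be moved to the front), $2^{n-1}$ from the sign-twist uniqueness in part (ii), and $(2n)!$ from the permutation action. The one thing you do that the paper does not is explicitly flag and propose a fix for the assertion that a general self-dual configuration has trivial $\mathfrak{S}_{2n}$-stabilizer; the paper simply states ``for a general $\Gamma$, all its permutations have distinct normal forms'' without argument, and your symmetry-free witness plus semicontinuity of stabilizers is a legitimate way to close that small gap.
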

A previous version of this paper contains a mistake in the degree computation, we thank Alessio Caminata for pointing this out to us.
\begin{proof}
The orthogonal normal form $P \mapsto \Gamma =  [I_n|P]$ is surjective by the previous theorem and the characterization of semi-stable points. This normal form of an ordered set of self-dual points is unique up to the action of $2^{n-1}$ ways of multiplying columns of $P\in \SO(n)$ by $\pm 1$ (action of $(\K^\times)^{2n}$ on columns of $\Gamma$) and the $2^{n-1}$ ways of multiplying the rows of $P$ by $\pm 1$ (action of $\SL(n)$ on rows); here we used $-1 \neq 1$. Moreover, for a general $\Gamma$, all its permutations by $\sigma \in \mathfrak{S}_{2n}$ have distinct normal forms, hence the rational map $\SO(n) \to \mathcal{A}_{n-1}$ is generically finite of degree $2^{2n-2}\cdot (2n)!$.
\end{proof}

From now on assume that $\operatorname{char}(\K) \neq 2$, there is no harm in restricting to $\K= \C$.
A parametrization of $\mathcal{A}_{n-1}$ more suitable for computation relies on the parametrization of the orthogonal group via the Cayley transform, originally introduced in \cite{Cayley1846}. Let $U \subseteq \K^{n\times n}$ be the set of matrices $A$ with $I_n+A$ invertible, then the \emph{Cayley transform} is the self-inverse isomorphism of varieties
\[
\mathcal{C} \colon U \to U, \qquad \mathcal{C}(A) = (I_n - A)(I_n + A)^{-1}.
\]
Noting that the product $(I_n - A)(I_n + A)^{-1}$ commutes, a straightforward computation shows that $\mathcal{C}$ induces a birational map between skew-symmetric and special orthogonal matrices:
\[
\mathcal{C} \colon \Skew(n)\cap U \leftrightarrow \SO(n) \cap U.
\]
Hence the orthogonal normal form $ [I_n \mid P]$ of a general set of points can be rewritten as $[I_n\mid \mathcal{C}(S)]$ for $S \in \Skew(n)$.

\begin{remark} \label{remark: eigenvalue 1}
Over any field, an orthogonal matrix of odd size \emph{always} has an eigenvalue equal to $1$ or $-1$ (if its determinant is, respectively, $1$ or $-1$). If the matrix is generic in $\SO(n)$, then we have that $-1$ is not an eigenvalue, thus the Cayley transform is well-defined for generic $P\in \SO(n)$.
\end{remark}
Since point configurations in $\mathcal{A}_{n-1}$ are only defined up to projective linear transformations, we can apply the transformation $I_n+S$ to obtain
\begin{equation} \label{Skew normal form}
[I_n\mid \mathcal{C}(S)] = [I_n\mid (I_n - S)(I_n + S)^{-1}] \quad \leadsto \quad [I_n + S \mid I_n - S].
\end{equation}
This avoids the delicate step of inverting a matrix in the implementation of this parametrization, reducing computation time and numerical error. This step was suggested to us by Simon Telen and we refer to the representation in \Cref{Skew normal form} as a \emph{skew normal form} (SNF).
    
\begin{theorem}[Skew normal form] \label{Skew_normal_form}
A general configuration of self-dual points $\Gamma\in \mathcal{A}_{n-1}$ admits a skew normal form. Moreover, the dominant rational map
\[
\Skew(n) \dashrightarrow \mathcal{A}_{n-1}, \qquad S \mapsto [I_n+S \mid I_n-S]
\]
is generically finite of degree $(2n)!2^{2n-2}$.
\end{theorem}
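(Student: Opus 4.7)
The plan is to chain the skew normal form map through the Cayley transform and the orthogonal normal form, using the preceding corollary to transport the degree count.

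First, I would establish surjectivity (as a rational map) by working backward from \Cref{orthogonal_normal_form}. For a general $\Gamma \in \mathcal{A}_{n-1}$, the orthogonal normal form produces $[I_n \mid P]$ with $P \in \SO(n)$. Generically $I_n + P$ is invertible, so $P$ lies in the image of the Cayley transform and I may write $P = \mathcal{C}(S) = (I_n - S)(I_n + S)^{-1}$ for a unique $S \in \Skew(n) \cap U$. Multiplying on the left by the invertible matrix $I_n + S$, which is a projective change of coordinates in $\PP^{n-1}$ and hence does not change the class in $\mathcal{A}_{n-1}$, transforms $[I_n \mid \mathcal{C}(S)]$ into $[I_n + S \mid I_n - S]$ as displayed in \eqref{Skew normal form}. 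This exhibits $\Gamma$ as the image of $S$ under the stated map and shows the map is dominant.

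Next I would turn to the degree. The Cayley transform $\mathcal{C}\colon \Skew(n)\cap U \leftrightarrow \SO(n)\cap U$ is a birational isomorphism, so it has generic degree $1$. The map from $\Skew(n)$ to $\mathcal{A}_{n-1}$ in the statement factors (on a dense open) as
\[
\Skew(n) \xrightarrow{\mathcal{C}} \SO(n) \twoheadrightarrow \mathcal{A}_{n-1},
\]
where the second map is the orthogonal normal form map of the preceding corollary, which is generically finite of degree $2^{n-1}(2n)!$. By multiplicativity of degrees for dominant generically finite rational maps, the composition is generically finite of degree $1 \cdot 2^{n-1}(2n)! = (2n)!\,2^{n-1}$, matching the claim.

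The main thing to verify carefully is that passing from $[I_n \mid \mathcal{C}(S)]$ to $[I_n+S \mid I_n-S]$ is indeed left multiplication by $I_n+S$ (this uses that the factors $(I_n-S)$ and $(I_n+S)^{-1}$ commute, which is immediate from $S$ and $I_n$ commuting), and that the resulting substitution does not alter the fiber structure over $\mathcal{A}_{n-1}$: since left multiplication by any fixed element of $\GL(n)$ acts trivially on the moduli space $\mathcal{A}_{n-1}$, the composed rational map $S \mapsto [I_n+S \mid I_n-S]$ and the composition $S \mapsto [I_n \mid \mathcal{C}(S)]$ agree as rational maps into $\mathcal{A}_{n-1}$. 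This is the one subtle point; everything else is a direct application of the two preceding results.
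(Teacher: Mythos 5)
Your proof is correct and matches the paper's approach: the paper sketches precisely this argument in the discussion preceding the theorem (Cayley transform is birational, then left-multiply by $I_n+S$ as a projective change of coordinates, then transport the degree from the corollary on the orthogonal normal form), and it states the theorem without a further formal proof. Your write-up just makes explicit the multiplicativity of degrees and the observation that left multiplication by $\GL(n)$ acts trivially on $\mathcal{A}_{n-1}$, both of which the paper leaves implicit.
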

Since a skew-symmetric matrix $S\in \Skew(n)$ is uniquely determined by the $\frac{n(n-1)}{2}$ entries in the upper triangular part, a suitable parameter space for $\mathcal{A}_{n-1}$ is $\C^{\frac{n(n-1)}{2}}$:
\[
(s_1,\dots,s_{\frac{n(n-1)}{2}}) \; \mapsto \; \left[
\begin{array}{cccc|cccc}
     1 &s_1&\cdots & s_{n-1}  & 1 &-s_1&\cdots & -s_{n-1} \\
     -s_1 & 1 & \ddots & \vdots & s_1 & 1 & \ddots & \vdots \\
     \vdots & \ddots&  \ddots & s_{\frac{n(n-1)}{2}} & \vdots & \ddots &  \ddots & -s_{\frac{n(n-1)}{2}} \\
     -s_{n-1} & \cdots & -s_{\frac{n(n-1)}{2}} & 1 & s_{n-1} & \cdots & s_{\frac{n(n-1)}{2}} & 1
\end{array}
\right].
\]
In our situation, $14$ general self-dual points in $\PP^6$ have exactly
\[
2^{12} \cdot 14! = 357.082.280.755.200
\]
distinct orthogonal or skew normal forms.

\subsection{Mukai-type results for \texorpdfstring{$\mathcal{A}_n$}{An}}\label{sec:mukai-results}

For small values of $n$, self-dual configurations in $\PP^n$ are well-studied. A related landmark result is Mukai's description of polarized K3 surfaces and (canonical) curves of genus $g$ as complete intersections on homogeneous varieties $X_g$ \cite{MUKAI1988357, mukai1992curves}. He proved that a general polarized K3 surface $(S,\mathcal{L})$ of genus $g$, meaning $\mathcal{L}$ is ample and $\mathcal{L}^2 = 2g-2$, arises as the intersection of homogeneous varieties $X_g \subseteq \PP^N$ with general hypersurfaces (in most cases hyperplanes) if $g\leq 10$. Similarly, general canonical curves $C \subseteq \PP^{g-1}$, embedded by their very ample canonical bundle $\omega_C$, arise as sections of $X_g$ if $g\leq 9$.

In dimension zero, general self-dual point configurations in $\PP^{g-2}$ arise as sections of $X_g$ if $g \leq 8$. This is classical for $g\leq 6$ (see \cite[Part II]{EISENBUD2000127} for a modern account) and was proven by Petrakiev for $g=7,8$ \cite{Petrakiev}. In particular, all self-dual point configurations in $\PP^n$ for $n\leq 6$ arise as linear sections of canonical curves by Mukai's result. For $g\geq 9$ this is no longer true by a simple moduli count. A summary of these results is given in the following theorem.

\begin{theorem} \label{thm:mukai_grass}
\begin{enumerate}
\item All sets of four points in $\PP^1$ are self-dual.
\item Six points in $\PP^2$ are self-dual if and only if they are the intersection of a quadric and cubic.
\item A general set in $\mathcal{A}_3$ is a complete intersection of three quadric surfaces in $\PP^3$.
\item A general set in $\mathcal{A}_4$ is a quadric section of an elliptic normal curve in $\PP^5$; equivalently it is the intersection of $\Gr(2,5) \subseteq \PP^9$ with a quadric and a linear space.
\item A general set in $\mathcal{A}_5$ is a linear section of the Lagrangean Grassmannian $X_7 = \operatorname{LG}_+(5,10) \subseteq \PP^{15}$.
\item A general set in $\mathcal{A}_6$ is a linear section of the Grassmannian $X_8 = \Gr(2,6) \subseteq \PP^{14}$.
\end{enumerate}\medskip
In cases \textup{(ii)--(vi)}, an equivalent statement is that the set $\Gamma \subseteq \PP^{n-1}$ is a hyperplane section of a canonical curve of genus $n+1$ in $\PP^n$.
\end{theorem}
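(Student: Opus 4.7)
The plan is to handle the six cases in increasing order of difficulty, using the Gorenstein characterization of \Cref{thm:gorenstein} for the elementary cases and citing Mukai's and Petrakiev's theorems for the harder ones. For (i), the equation $\Gamma\Lambda\Gamma^\transpose = 0$ with $\Gamma$ of size $2\times 4$ imposes only $3$ linear conditions on the $4$ diagonal entries of $\Lambda$, so a nontrivial solution always exists, and for four distinct points all of its entries may be chosen nonzero exactly as in the proof of \Cref{lem:selfdual_characterization}.

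For (ii) and (iii) I would argue via Hilbert functions. By \Cref{thm:gorenstein}, a self-dual configuration is arithmetically Gorenstein, so the $h$-vector of $S/I(\Gamma)$ is symmetric. For $6$ non-degenerate points in $\PP^2$ one has $h_1 = 2$, hence the only admissible symmetric $h$-vector summing to $6$ is $(1,2,2,1)$; this forces the ideal to contain a conic, and the socle component in degree $3$ supplies a cubic, which together cut out $\Gamma$ as a complete intersection by Bezout. Similarly, for $8$ non-degenerate points in $\PP^3$ one gets $h = (1,3,3,1)$, yielding three quadrics that must form a complete intersection (a count of cubic syzygies shows that these three quadrics already generate the ideal). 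Both converses are immediate since complete intersections are Gorenstein.

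For (iv)--(vi) I would appeal to the literature rather than reprove the results. Cases (iv) and (v) are classical and are written out in detail in \cite[Part II]{EISENBUD2000127}; the essential inputs are the Buchsbaum--Eisenbud structure theorem for Gorenstein codimension-three ideals and the half-spinor description of $\operatorname{LG}_+(5,10)$. Case (vi), the main input for this paper, is Petrakiev's theorem, whose proof I would only summarize: it uses Mukai's description of general K3 surfaces of genus $8$ as linear sections of $\Gr(2,6)$, together with a careful dimension and tangent-space count, to show that the resulting $14$-point configurations surject onto $\mathcal{A}_6$. I expect the hardest part to be precisely this dimension count, since one must verify that generic sections produce reduced, linearly general configurations whose Gale-invariant moduli exhaust the $21$-dimensional space $\mathcal{A}_6$, and that the relevant stabilizers behave as expected.

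For the concluding hyperplane section assertion I would invoke Mukai again: a canonical curve $C \subseteq \PP^n$ of genus $n+1 \leq 8$ arises as a linear section of the Mukai variety $X_{n+1}$, and any hyperplane section of $C$ is simultaneously a linear section of $X_{n+1}$ of one dimension lower, hence is self-dual by cases (ii)--(vi) and the Gorenstein characterization. Conversely, a self-dual $\Gamma$ of the form $\mathbb{L} \cap X_{n+1}$ is contained in the variety cut out by all but one of the hyperplanes defining $\mathbb{L}$, and for generic $\Gamma$ this variety is a canonical curve of genus $n+1$ by Mukai's structure theorem for genus $\leq 9$ canonical curves.
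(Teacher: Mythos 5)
The paper itself gives no proof of this theorem; it is a survey-style statement whose justification is the paragraph immediately preceding it, which cites \cite[Part II]{EISENBUD2000127} for the classical cases $g\leq 6$ (corresponding to parts (i)--(iv)) and \cite{Petrakiev} for $g=7,8$ (parts (v) and (vi)), with the concluding hyperplane-section equivalence credited to Mukai. Your proposal follows the same overall blueprint, and your elementary fill-ins for (i)--(iii) via the Gale/Gorenstein characterizations and the $h$-vectors $(1,2,2,1)$ and $(1,3,3,1)$ are the right idea.

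That said, a few things are off and one is a genuine misattribution. First, you credit both (iv) and (v) to Eisenbud--Popescu as classical, but part (v) (twelve points in $\PP^5$ as a linear section of $\operatorname{LG}_+(5,10)$, $g=7$) is precisely one of the two cases proven by Petrakiev; Eisenbud--Popescu's Part II covers $g\leq 6$, hence only (i)--(iv). Second, your invocation of \Cref{thm:gorenstein} is too quick: that theorem does not say ``self-dual $\Rightarrow$ arithmetically Gorenstein,'' it says self-dual \emph{together with failing by exactly one to impose independent conditions on quadrics} is equivalent to arithmetically Gorenstein. For (ii) and (iii) you must first use \Cref{lem:selfdual_characterization}(i) to deduce that a non-degenerate self-dual $\Gamma$ lies on a quadric (hence fails to impose independent conditions), and then check, for \emph{general} $\Gamma$, that it fails by exactly one and every $2n-1$-subset imposes independent conditions, before the $h$-vector argument applies. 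Finally, the Buchsbaum--Eisenbud structure theorem for codimension-three Gorenstein ideals is relevant to (iii) (eight points of codimension $3$ in $\PP^3$), not to (iv), which is codimension $4$; the literature argument for (iv) is genuinely more involved.
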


Part (iv)--(vi) all correspond to a non-trivial lifting problem, the last one being exactly \Cref{Problem: Mukai lift}. For reference, we record a precise statement of Petrakiev's result on $\mathcal{A}_6$. The group $\SL(6)$ acts on $\C^{15} = \bigwedge^2\C^6$ and hence induces an action on $\Gr(7,\C^{15})$, leaving $X_8 \subseteq \PP(\C^{15})$ invariant.

\begin{theorem}[{\cite[Thm.\ 3.3]{Petrakiev}}] \label{thm:petrakiev}
The slicing map
\[
f \colon \Gr(7,\C^{15})/\SL(6) \dashrightarrow \mathcal{A}_6, \qquad \mathbb{L} \mapsto \mathbb{L} \cap X_8
\]
defines a generically finite dominant rational map to the moduli space of self-dual points $\mathcal{A}_6$.
\end{theorem}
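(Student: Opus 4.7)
The plan is to combine the arithmetic Gorenstein property of the Plücker-embedded Grassmannian with the Eisenbud--Popescu characterization in \Cref{thm:gorenstein}, and then to upgrade the resulting well-defined map to a generically finite dominant morphism via a dimension count and an explicit tangent-space computation.

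\textbf{Dimensions.} First I record that both source and target are $21$-dimensional. On the left, $\dim \Gr(7,\C^{15}) = 7\cdot 8 = 56$ and $\dim \SL(6) = 35$, giving $56-35=21$. On the right, the skew normal form \Cref{Skew_normal_form} parametrizes $\mathcal{A}_6$ generically by $\Skew(7)$, which has dimension $\binom{7}{2} = 21$.

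\textbf{Well-definedness.} The Grassmannian $X_8 \subseteq \PP^{14}$ is arithmetically Cohen--Macaulay and Gorenstein, of dimension $8$ and degree $14$. For generic $\mathbb{L}$ of dimension $6$, the $8$ linear forms cutting out $\mathbb{L}$ form a regular sequence on the coordinate ring of $X_8$, so the quotient is a zero-dimensional graded Gorenstein ring of length $14$. Bertini ensures that $\Gamma \coloneqq \mathbb{L}\cap X_8$ is reduced and non-degenerate in $\mathbb{L}\cong \PP^6$, and applying \Cref{thm:gorenstein} to $\Gamma$ then shows it is self-dual. Hence $f$ is defined on a dense open subset. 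The action of $\SL(6)$ on $\mathbb{L}$ induces a projective linear change of coordinates on $\PP(\mathbb{L})\cong\PP^6$, so the resulting point configuration is unchanged as a class in $\mathcal{A}_6$ and $f$ factors through the quotient.

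\textbf{Dominance and finiteness.} Given equidimensionality of irreducible varieties, dominance and generic finiteness follow together as soon as the differential $df$ has full rank at some point. I would parametrize a neighborhood of a chosen $\mathbb{L}_0$ by $7\times 8$ matrices modulo $\SL(6)$ and a neighborhood of $\Gamma_0 \coloneqq f(\mathbb{L}_0)$ by $\Skew(7)$, then differentiate the incidence equations $L(\Gamma) = \mathbb{L}\cap X_8$ to obtain an explicit $21\times 21$ Jacobian. Non-vanishing of its determinant at a single carefully chosen $\mathbb{L}_0$ suffices and propagates to a dense open locus by semi-continuity, yielding both dominance and generic finiteness.

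\textbf{Main obstacle.} The difficulty lies in producing a computable $\mathbb{L}_0$ whose fiber $\Gamma_0$ is generic enough that the Jacobian is non-degenerate, yet symmetric enough to be tractable by hand; a natural candidate is a configuration stabilized by a large finite subgroup of $\SL(6)$, reducing the rank check to a representation-theoretic question. Alternatively, in the spirit of this paper, one may verify dominance numerically: using homotopy continuation to solve $\mathbb{L}\cap X_8 = \Gamma_0$ for a random $\Gamma_0\in \mathcal{A}_6$ and recovering a finite non-empty fiber constitutes a constructive certificate of the theorem.
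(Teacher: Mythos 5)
This is a cited result — the paper states \Cref{thm:petrakiev} as Petrakiev's Theorem~3.3 and offers no proof of its own, so there is no internal argument to compare against. What can be assessed is whether your sketch would constitute a proof.

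Your dimension count ($56-35 = 21 = \binom{7}{2}$) is correct, and your well-definedness argument is sound: $X_8$ is arithmetically Gorenstein, a general $6$-dimensional linear slice is a reduced non-degenerate length-$14$ Gorenstein subscheme, and \Cref{thm:gorenstein}(iii) then gives self-duality; this mirrors the argument the paper uses in \Cref{Lemma_8 line eq} for the forward slicing map. The $\SL(6)$-invariance is also straightforward.

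The gap is the heart of the theorem, namely dominance. Equality of dimensions of (irreducible) source and target does not by itself give dominance; you correctly identify that you need the differential $df$ to be surjective at a single point, but you never produce such a point, never write down the $21\times 21$ Jacobian, and never carry out the rank computation. You acknowledge this yourself under ``Main obstacle,'' which is an honest assessment, but it means the proposal is a plan rather than a proof. The fallback you offer — solving the lifting problem numerically for a random $\Gamma_0$ and observing a nonempty fiber — does not close the gap either: a floating-point homotopy run is not a certificate, and in the context of this paper the very algorithm you would invoke presupposes the existence statement you are trying to prove. If you want a route that is closer in spirit to how such results are actually established, note that \Cref{thm:mukai_grass} records the equivalence between ``$\Gamma$ is a linear slice of $X_8$'' and ``$\Gamma$ is a hyperplane section of a genus-$8$ canonical curve''; combining Mukai's theorem that general genus-$8$ canonical curves are linear sections of $\Gr(2,6)$ with a statement that general self-dual configurations in $\PP^6$ lie on such curves gives dominance more conceptually than a brute-force Jacobian at a special $\mathbb{L}_0$. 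Either way, as written the dominance step is missing and the proposal does not prove the theorem.
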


\section{Slicing using homotopy continuation} \label{Slcing using HC}

\subsection{Introduction to homotopy continuation}

Homotopy continuation is a computational method used to solve a multivariate polynomial system $F(x)=0$ with a zero-dimensional set of solutions over the complex numbers. It relies on finding a homotopy that deforms $F(x)$ into a system that is easier to solve.
Usually the system is parameterized introducing some variables $\underline{a}$ giving a family of systems $F(x,\underline{a})$ with the same number of solutions for every $\underline{a}$. The system  we want to solve corresponds to the target parameter $F(x,\underline{a}_{\text{target}}) = F(x)$ and the one that we are able to solve corresponds to the start parameter $F(x,\underline{a}_{\text{start}})$. Defining a homotopy between these two systems, we move the start parameter and, numerically tracking paths, we deform the solution of $F(x,\underline{a}_{\text{start}})$ into the solution of $F(x,\underline{a}_{\text{target}})$. 
An example of homotopy between two systems $G$ and $F$ is given by a straight line between them:
\[
H(x,u)= (1-u)G(x) + uF(x) \qquad u\in[0,1].
\]
This technique has been implemented by Breiding and Timme \cite{breiding2018homotopycontinuationjl} in the \texttt{Julia} package \texttt{HomotopyContinuation.jl}, as the following snippet of code demonstrates.
\begin{minted}{julia}
using HomotopyContinuation as HC              
@var x[1:n] a[1:m]
f = [f_1,...,f_l] # Define equations f_i(x,a) appropriately
F = System(f, variables = x[1:n], parameters = a[1:m])       
res = HC.solve(F, start_sol; start_parameters = a_start, target_parameters = a_target)
\end{minted}

For a polynomial system there are infinitely many possible start systems and homotopies. Different choices may have a different impact on the computational time of the \texttt{solve()} function. If the start system has the same number of solutions as $F(x)$, we say that it is \emph{optimal}.
Optimal start systems are preferred because they require to track the minimal number of paths possible and still get all solutions of $F(x)=0$. This choice also shortens significantly the computation time. However, in general there is no way to predict which start system and homotopy will lead to the quickest time to solve. It is also necessary to ensure that the parameterized system $F(x,\underline{a})$ has the same number of solutions. In other words, we need a parameterization $F(x,\underline{a})$ such that we can keep track of all the start solutions along the way. This is a fundamental challenge in the Mukai lifting problem, because it requires to parameterize the space of self-dual points in $\PP^6$ (and not just arbitrary point configurations). 

\subsection{Computing a linear section of the Grassmannian} \label{Equations for the linear section}

In this section we compute a set of self-dual points $\Gamma$ as  a given $6$-dimensional linear subspace $\mathbb{L}$ intersecting the Grassmannian $X_8\coloneqq \Gr(2,6)\subseteq \PP^{14}$. By construction, the linear space $\mathbb{L} \subseteq \PP^{14}$ is a solution to the Mukai lifting problem for $\Gamma$. In the next section we set up a homotopy in which $\Gamma$ serves to get the start parameter $\underline{a}_{\text{start}}$ and $\mathbb{L}$ serves to get the start solution. 
We recall that $X_8$ parameterizes $2$-dimensional subspaces in $\C^6$. We represent an element in an affine chart of $X_8$ as the row span of the $2\times 6$ matrix:
\begin{equation} \label{matrix_grass}
     H = \begin{bmatrix}
     1 & 0 &  t_1 & t_2 & t_3 & t_4 \\
     0 & 1 & t_5 & t_6 & t_7 & t_8
    \end{bmatrix}.  
\end{equation}
The Plücker embedding $p\colon \Gr(2,6) \hookrightarrow \PP^{14}$, $p(H)= (p_0:\dots:p_{14}) \in \PP^{14} $ is given by taking the $2\times 2$ minors of $H$, these are the $15$ Plücker coordinates in $\C[t_1,\dots,t_8]$. In our affine chart we have $p_0=1$.
\begin{lemma} \label{Lemma_8 line eq}
      Let $A \in \C^{8\times 15}$ be a general matrix and $\mathbb{L} = \PP(\Ker A)$. The polynomial system in $\underline{t}$
\begin{equation} \label{lin_eq}
A  \cdot \big(p_0(\underline{t}),\dots, p_{14}(\underline{t}) \big)^\intercal = 0
\end{equation}
has exactly $14$ solutions $\underline{t}_1,\dots,\underline{t}_{14}$ and the set $Z = \set{p(\underline{t}_k) | k=1,\dots,14 }$ is the image of a configuration of self-dual points $\Gamma\subseteq \proj^6$ under a linear map $L\colon\PP^6 \overset{{\sim}}{\to} \mathbb{L}\subseteq \PP^{14}$.
\end{lemma}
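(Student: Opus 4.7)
The plan is to combine a classical degree computation for linear sections of the Grassmannian with Petrakiev's theorem. First, I would count the solutions. The Plücker embedding realizes $X_8 = \Gr(2,6)$ as an $8$-dimensional projective variety in $\PP^{14}$ of degree $\tfrac{1}{5}\binom{8}{4} = 14$. Since $\mathbb{L} = \PP(\Ker A)$ has codimension $8$, Bertini's theorem guarantees that for a Zariski-generic choice of $A$ the scheme-theoretic intersection $\mathbb{L} \cap X_8$ is zero-dimensional, reduced, and consists of exactly $14$ points.

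Next I would translate this intersection into solutions of the polynomial system \eqref{lin_eq}. The open affine chart $\{p_0 \neq 0\} \cap X_8$ is parametrized by the matrix $H$ of \eqref{matrix_grass} via the Plücker map, so that any point of $X_8$ with $p_0 \neq 0$ corresponds to a unique $\underline{t} \in \C^8$. Since $\{p_0 = 0\} \cap X_8$ is a proper closed subvariety, for generic $A$ the subspace $\mathbb{L}$ meets $X_8$ entirely in the affine chart. Hence each of the $14$ intersection points lifts to a distinct $\underline{t}_k \in \C^8$ solving \eqref{lin_eq}, and these exhaust all solutions by the degree count.

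For self-duality, choose any linear isomorphism $L \colon \PP^6 \overset{\sim}{\to} \mathbb{L} \subseteq \PP^{14}$ and set $\Gamma \coloneqq L^{-1}(\mathbb{L} \cap X_8) \subseteq \PP^6$. By \Cref{thm:petrakiev}, the slicing map $\Gr(7,\C^{15})/\SL(6) \dashrightarrow \mathcal{A}_6$ is dominant, so for $\mathbb{L}$ in a dense open subset of $\Gr(7,\C^{15})$ the configuration $\Gamma$ is self-dual. Different choices of $L$ differ by $\PGL(7)$ and yield projectively equivalent $\Gamma$, so the conclusion is independent of $L$.

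The only subtlety, which is routine, is to coordinate the three genericity requirements: Bertini transversality, avoidance of the hyperplane $\{p_0 = 0\}$, and the dense open condition implicit in Petrakiev's theorem. Each cuts out a Zariski-open subset of $\C^{8 \times 15}$, and their intersection is still dense and open, which is exactly what "general $A$" provides.
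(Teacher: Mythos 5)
Your degree count and the passage to the affine chart $\{p_0 \neq 0\}$ match the paper. The difference is how self-duality of $\Gamma = L^{-1}(\mathbb{L} \cap X_8)$ is established. You invoke \Cref{thm:petrakiev}, observing that the mere well-definedness of $f\colon \Gr(7,\C^{15})/\SL(6) \dashrightarrow \mathcal{A}_6$ forces a generic $\mathbb{L}$ to cut out a self-dual configuration. This is logically valid and certainly the shortest path, but it uses Petrakiev's full result as a black box to obtain a fact that Petrakiev's own proof already had to establish along the way. The paper instead gives a self-contained and more elementary argument: $X_8 \subseteq \PP^{14}$ is arithmetically Gorenstein, a general linear section of an arithmetically Gorenstein variety remains arithmetically Gorenstein, and then the Eisenbud--Popescu characterization (\Cref{thm:gorenstein}(iii)) identifies arithmetically Gorenstein sets of $2n$ points in $\PP^{n-1}$ with self-dual ones. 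The Gorenstein route buys independence from Petrakiev's theorem, which is aesthetically preferable since this lemma is a far weaker statement than dominance and generic finiteness of $f$; your route buys brevity at the cost of citing a heavier theorem than needed. Both are correct, and you handled the genericity bookkeeping (transversality, avoidance of $p_0=0$, and the dense open domain of $f$) carefully.
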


\begin{proof}
The Grassmannian $X_8$ is a $8$-dimensional variety of degree $14$ \cite[Thm~5.13]{Michalek2021Invit-53971}, so its intersection with $8$ linear hyperplanes in general position contains exactly $14$ points.
Let  $L$ be a kernel matrix for $A$. If $\underline{t}_{k}$ is a solution to \eqref{lin_eq}, then $p(\underline{t}_k)$ is a linear combination of the columns of $L$ and it lies in the image $\mathbb{L}\coloneqq {\rm Im}(L)$ of the linear map $L\colon \PP^6\rightarrow \proj^{14}$. Since $X_8\subseteq \PP^{14}$ is arithmetically Gorenstein, so is $Z\subseteq \mathbb{L}$ and by \Cref{thm:gorenstein}(iii) there exists a self dual set of points $\Gamma\subseteq \proj^6$ such that $ L(\Gamma)=P$. To compute such a configuration it is enough to compute a solution $\Gamma$ to:
\[
L \cdot \Gamma = \begin{bmatrix}        p_0(\underline{t}_1) &  & p_0(\underline{t}_{14}) \\
                      \vdots & \cdots & \vdots \\
                      p_{14}(\underline{t}_1) &  & p_{14}(\underline{t}_{14})
\end{bmatrix}. \qedhere
\]
\end{proof} 
We point out that the previous Lemma gives an algorithm to construct the self-dual configuration $\Gamma$ in a linear section $\mathbb{L}$ defined by $8$ hyperplanes $\sum_{j=1}^{15} a_{ij}x_j=0$, $i=1,\dots,8$. 
After fixing a random matrix $A$, we use a toric degeneration of the Grassmannian to compute solutions $\underline{t}_k$ for \eqref{lin_eq}. We dedicate the following section to describe the construction of the degeneration.

\subsection{Toric degeneration of the Grassmannian}

The homogeneous coordinate ring $R \coloneqq S_X\subseteq \K[t_1,\dots,t_n]$ of a projective variety $X \subseteq \PP^{n-1}(\K)$ can be studied by looking at the \emph{initial algebra} of $R$ with respect to a term order $\prec$, that is, the algebra generated by the initial terms with respect to $\prec$ of every element in $R$
\[
{\rm in}_\prec(R) \coloneqq \K[{\rm in}_\prec(f):f\in R].
\]
Many geometric properties of the variety $X$ can be understood by looking at the corresponding algebraic properties of the initial algebra ${\rm in}_\prec(R)$. For example, if the initial algebra is normal, Cohen-Macaulay or Gorenstein, then so is the coordinate ring $R$. We refer to \cite{bruns2022determinants} for further details. Although the coordinate ring of a projective variety is a finitely generated algebra, the corresponding initial algebra may not be finitely generated for a given term order. This happens when $R$ does not admit a finite Khovanskii basis for that term order. The existence of universal finite Sagbi basis is an active research problem, see \cite{BRUNS2024102237} for some recent developments.

\begin{definition}
    Let $R\subseteq \K[t_1,\dots,t_n]$ be a finitely generated polynomial algebra. A set of generators $\mathcal{F}$ of $R$ is a \emph{Khovanskii basis} with respect to a term order $\prec$ if ${\rm in}_\prec(R)= \K[{\rm in}_\prec(f):f\in \mathcal{F}]$.
\end{definition}
Khovanskii bases have been introduced by Robbiano and Sweedler with the name of \emph{Sagbi bases} in \cite{robbiano2006subalgebra} and they are called \emph{Canonical Subalgebra bases} by Sturmfels in \cite{sturmfelsgröbner}. Later, Kaveh and Manon in \cite{doi:10.1137/17M1160148} gave a more general definition introducing the terminology that we use, even if the context of this paper is in the Sagbi case. Khovanskii bases are the analog of Gröbner bases for ideals to subalgebras and they have been effectively used in computer algebra in \cite{BETTI2025102340}. When the coordinate ring $R$ admits a finite Khovanskii basis, the initial algebra is a finitely generated monomial algebra, hence it can be viewed as the coordinate ring of a toric variety. In this case we say that $X$ admits a \emph{toric degeneration} to the projective toric variety ${\rm Proj}({\rm in}_\prec(R))$.

\begin{definition}
A toric degeneration of $X\subseteq \PP^n$ is a variety $Y\subseteq \PP^n \times \mathbb{A}^1$ such that the projection $\pi\colon Y \to \mathbb{A}^1$ is flat, $\pi^{-1}(0) \hookrightarrow \PP^n$ is a toric variety and $\pi^{-1}(\lambda) \cong X$ for every $\lambda\neq 0$.
\end{definition}

Degenerations are used to formalize the concept of limit of a family of varieties as the \emph{special fiber} $\pi^{-1}(0)$ can be thought of as the limit of $\pi^{-1}(t)$ for $t\rightarrow 0$.
Now we illustrate explicitly this degeneration for the Grassmannian $X_8$. For a \emph{weight vector} $\omega$, we define the $\omega$-initial form ${\rm in}_\omega(f)$ of a polynomial $f=\sum c_\alpha t^\alpha$ to be the sum  of the terms $c_\alpha t^\alpha$ such that $\alpha\cdot\omega$ is maximal. A weight vector $\omega$ represents a term order $\prec$ for a finite set of polynomials $\mathcal{F}$ if ${\rm in}_\omega(f)={\rm in}_\prec(f)$ for every $f\in\mathcal{F}$. 
The Plücker coordinates generate the coordinate ring $R$ of $X_8 \cap \{p_0= 1\} \subseteq \mathbb{A}^{14}$, that is $R=\C[p_0(\underline{t}),\dots,p_{14}(\underline{t})]$. A \emph{diagonal term order} is a term order for which the initial terms of the Plücker coordinates $p_i$ are the corresponding monomial in a main diagonal of \eqref{matrix_grass}.
\begin{proposition}[{\cite[Prop.~11.8]{sturmfelsgröbner}}]
    The Plücker coordinates are a finite Khovanskii basis for the coordinate ring $R$ of the Grassmannian $X_8$ with respect to any diagonal term order $\prec_{\mathrm d}$, which is represented by the weight vector $\omega = (3,2,1,0,0,1,2,3)$ on the Plücker coordinates: 
\[
{\rm in}_\omega(R)= \K[ {\rm in}_\omega(p_0),\dots, {\rm in}_\omega(p_{14})].
\]
\end{proposition}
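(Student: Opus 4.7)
The plan is to invoke the Khovanskii basis criterion: the set $\{p_0,\dots,p_{14}\}$ generates $R$ as an algebra, so it is a Khovanskii basis with respect to $\omega$ exactly when the initial ideal $\mathrm{in}_\omega(I)$ of the Plücker ideal $I \subseteq \C[y_0,\dots,y_{14}]$ (the kernel of the map $y_i \mapsto p_i(\underline{t})$) coincides with the toric ideal $J$ of algebraic relations among the initial forms $\mathrm{in}_\omega(p_0),\dots,\mathrm{in}_\omega(p_{14})$. Equivalently, every S-pair arising from a binomial in $J$ must subduct to zero against the Plücker polynomials.

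I would carry this out in three steps. First, I verify that $\omega = (3,2,1,0,0,1,2,3)$ indeed picks out the main-diagonal monomial of every $2\times 2$ minor of the matrix in (\ref{matrix_grass}); nine of the Plücker coordinates are already monomials (those involving columns $1$ or $2$), and for the six genuinely quadratic minors a direct inspection shows the diagonal product strictly dominates the anti-diagonal product in $\omega$-weight -- for instance $t_1 t_6$ has weight $3+1=4$ against $t_2 t_5$ of weight $2+0=2$, and $t_1 t_8$ has weight $3+3=6$ against $t_4 t_5$ of weight $0+0=0$. Second, I write down the fifteen three-term Plücker relations
\[
p_{ij}p_{kl} - p_{ik}p_{jl} + p_{il}p_{jk} = 0, \qquad 1 \le i<j<k<l \le 6,
\]
which generate the Plücker ideal, and observe that under any diagonal term order each has a binomial $\omega$-initial form. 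These binomials generate a toric ideal $J_0$ contained in both $\mathrm{in}_\omega(I)$ and $J$. Third, I compare Hilbert functions: the Hodge--Young standard monomial theorem asserts that products of Plücker coordinates indexed by semi-standard Young tableaux of shape $2 \times k$ on $\{1,\dots,6\}$ form a $\C$-basis of $R$, and the very same family of monomials (in the $y$-variables) is a $\C$-basis of $\C[y]/J_0$ by the standard theory of algebras with straightening law. This coincidence of Hilbert functions forces $J_0 = J = \mathrm{in}_\omega(I)$, and the Khovanskii criterion then delivers the desired identity $\mathrm{in}_\omega(R) = \C[\mathrm{in}_\omega(p_0),\dots,\mathrm{in}_\omega(p_{14})]$.

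The main obstacle is importing the Hodge--Young standard monomial theorem; the weight computation and the list of Plücker relations are finite and explicit, but the basis statement is the classical algebraic input on which everything pivots. The whole argument is subsumed by \cite[Prop.~11.8]{sturmfelsgröbner}, which is the source the proposition cites.
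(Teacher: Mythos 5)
The paper offers no proof of this proposition; it is quoted verbatim from Sturmfels' \emph{Gr\"obner Bases and Convex Polytopes}, Prop.~11.8, so there is nothing to compare your argument against inside the paper itself. That said, your reconstruction is essentially the standard proof and is the one given in the cited source. Your weight check is correct: nine Pl\"ucker coordinates are (signed) single variables, and for the six genuine $2\times 2$ minors $p_{34},p_{35},p_{36},p_{45},p_{46},p_{56}$ the main-diagonal monomial strictly dominates, e.g.\ $\omega(t_1t_6)=4>2=\omega(t_2t_5)$. The fifteen three-term Pl\"ucker relations indeed degenerate to binomials under the induced weight $\tilde\omega(y_{ij})=\omega(\mathrm{in}_\omega p_{ij})$, and comparing Hilbert functions via Hodge--Young standard monomial theory is exactly the classical mechanism. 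One step worth stating more carefully is the direction of the inequalities you are pinching: with $J_0\subseteq J$ and the general inequality $\mathrm{Hilb}(k[y]/J)\le\mathrm{Hilb}(\mathrm{in}_\omega R)=\mathrm{Hilb}(R)=\mathrm{Hilb}(k[y]/I)$, the equality $\mathrm{Hilb}(k[y]/J_0)=\mathrm{Hilb}(k[y]/I)$ squeezes $J_0=J$ and forces $k[\mathrm{in}_\omega p_i]=\mathrm{in}_\omega R$; as written your sentence \enquote{this coincidence $\dots$ forces $J_0=J=\mathrm{in}_\omega(I)$} compresses this chain and should spell out which containment delivers which inequality. With that adjustment the sketch is sound and, as you yourself note, subsumed by the reference the paper cites.
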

We introduce a new variable $u$, that will be the parameter of the degeneration. For every polynomial $f= \sum_{\alpha} c_\alpha t^\alpha \in R$, we define $\nu(f)\coloneqq \max\set{\omega\cdot\alpha | c_\alpha\neq 0}$ and
\begin{equation} \label{f_u}
    \begin{aligned}
    f_u &\coloneqq \sum\limits_{\alpha} c_\alpha t^\alpha\cdot u^{\nu(f)-\omega\cdot\alpha } \in \K[t_1,\dots,t_8,u], \\
     R_u &\coloneqq \K[f_u: f\in R] \subseteq \K[t_1,\dots,t_8,u].
    \end{aligned}
\end{equation}
We observe that $f_0= {\rm in}_\omega(f)= \lim\limits_{u\rightarrow 0} f_u$ and $f_1=f$. In particular we recover the coordinate ring and its initial algebra as $R_0={\rm in}_\omega(R)$ and $R_1=R$. By moving the parameter $u$ from $1$ to $0$ we degenerate the coordinate ring $R$ into a monomial algebra. The latter is the coordinate ring of a toric variety and this corresponds to degenerating $X_8$ into the toric variety $\operatorname{Spec}(R_0)$.
\begin{remark}
    The previous construction is an application of the more general result by Anderson \cite{cite-key} regarding toric degenerations of projective varieties whose coordinate ring has a full-rank valuation with finitely generated value semigroup. Taking $\omega$ as before, the full-rank valuation
 \[
 \nu \colon R\setminus{0}\rightarrow \mathbb{Z}^8,  \qquad \nu \Big(\sum\limits_{\alpha} c_\alpha t^\alpha \Big) = \operatorname{argmax} \Set{\omega\cdot \alpha | c_\alpha\neq 0}.
 \]
 induces a term order represented by the vector $\omega$. The existence of a finite Khovanskii basis for this term order is equivalent to having a finitely generated value semigroup for $\nu$  by \cite{doi:10.1137/17M1160148}.
\end{remark}

Going back to \Cref{lin_eq}, we introduce the parameter $u$ to degenerate it to a toric ideal as follows. We denote by $\underline{a}$ the entries of the matrix $A$ from \Cref{Lemma_8 line eq} and define the system $F(\underline{t},u, \underline{a})$ with $8$ equations by
\begin{equation} \label{F_u}
F(\underline{t},u, \underline{a})=\{ A\cdot (p_{0,u}(\underline{t}),\dots, p_{14,u}(\underline{t}))^\intercal=0 \}.
\end{equation} 
The variables $u, \underline{a}$ are the parameters in the homotopy, the equations in \eqref{lin_eq} correspond to the target system $F(\underline{t},1, \underline{a})$. Fixing random values $\underline{a}_{\text s}$ we obtain the start system $F(\underline{t},0, \underline{a}_{\text s})$ that involves only linear relations among monomials, i.e. the leading terms of the Plücker coordinates. 
\texttt{HomotopyContinuation.jl} can easily solve this by automatically choosing a polyhedral homotopy. Then we track the solutions to obtain the solutions of $F(\underline{t},1, \underline{a}_{\text s})$. Given a matrix $A$ with entries $\underline{a}_{\text t}$, we solve $F(\underline{t},1,\underline{a}_{\text t})$ as the following snippet of \texttt{Julia} code shows.

The equations in \eqref{F_u} are encoded in the vector \texttt{F}. The first homotopy computes the solution on the toric degeneration by moving the hyperplanes that intersect it and the parameter $u=0$ is fixed. The second homotopy transports the solution from the toric degeneration to the Grassmannian $X_8$ by increasing the parameter $u$ from $0$ to $1$ and fixing the parameters $\underline{a}$. The third homotopy finally solves $F(\underline{t},1,\underline{a}_{\text t})$ by tracking the solution of $F(\underline{t},1,\underline{a}_{\text s})$.
\begin{minted}{julia}
C         = System(F, variables = x, parameters = [u;a[:]])

A_rand    = randn(ComplexF64, length(a));
a0_start  = [0;A_rand]
toric_sol = solutions(HC.solve(C,            # solutions of F(t,0,a_s)
                target_parameters = a0_start)) 

a1_start  = [1;A_rand]
start_sol = solutions(HC.solve(C, toric_sol; # solutions of F(t,1,a_s)
                start_parameters = a0_start, target_parameters = a1_start))
            
a1_target = [1;A] # given A
X8_cap_A  = solutions(HC.solve(C, start_sol; # solutions of F(t,1,a_t)
                start_parameters = a1_start, target_parameters = a1_target)) 
\end{minted} 
Once we have the solutions $P$ to $F(\underline{t},1, \underline{a}_t)$, we obtain a configuration of self-dual points $\Gamma$ that is embedded in $\proj^{14}$ by solving the linear system $L( \Gamma )=P$, where $L \coloneqq \Ker([\underline{a}_t])$ is a kernel basis. This is illustrated in the following snippet of code, the function \texttt{numerical\char`_plücker} (numerically) evaluates the Plücker embedding on points in the affine chart $H \cong \C^8$.
\begin{minted}{julia}
Z = reshape(vcat(numerical_plücker.(X8_cap_A)...),15,14) # Embed points in P^14
L = LinearAlgebra.nullspace(A) # Obtain matrix L whose image is Ker(A)
Γ = L\Z # Take the inverse image of Z under L
\end{minted}

\subsection{Real solutions}

For a transversal slicing of the Grassmannian $X_8$ with $8$ complex hyperplanes, as described in Equation \eqref{lin_eq}, we get $14$ complex solutions. We study how many of these are real for random choices of real hyperplanes. The regions in $\Gr_{\R}(7,15)$ corresponding to transversal linear spaces are partitioned into open semi-algebraic cells by the number of real intersection points.

We compute the solutions with \texttt{HomotopyContinuation.jl} using $F(\underline{t},1,\underline{a}_t)$ as start system, for which we computed a start solution \texttt{X8\_cap\_A} previously. The hyperplanes are chosen as follows: we draw the coefficients of the defining equations from a standard normal distribution with mean 0 and standard deviation 1.  This gives the uniform distribution on the Grassmannian $\Gr_{\R}(7,15)$ with respect to the Haar measure from the compact real Lie group $\operatorname{O}(15)$ \cite[Thm.\ 2.2.1]{Chikuse2003-tz}. In \Cref{table: sample} we report the result of the sampling after ten million iterations. We observe that the case in which all $14$ solutions are real is very rare but it does happen. The columns \textit{succ} and \textit{fail} report the number of times the computations succeeded in computing all the $14$ distinct solutions and the number of times it failed.

\begin{table}[ht]
    \centering
    \scalebox{0.88}{
    \begin{tabular}{ccccccccccc}
        Real solutions &0 & 2 & 4 & 6 & 8 & 10 & 12 & 14 & succ & fail\\ \hline
        Count    & 178244  & 2277545 & 4079864 & 2547002 & 790369 & 116267 & 10508 &192 & 9999991 & 9  \\
        Proportion & 1.78\%  & 22.78\% & 40.80\% &  25.47\%& 7.90\%& 1.16\% & 0.11\% & 0.002\% & 100\% & -
    \end{tabular}
    }
    \caption{Slicing $X_8$ with uniformly sampled real $\mathbb{L} \subseteq \PP^{14}(\R)$ $10^7$ times.}\label{table: sample}
    \label{tab:my_label}
\end{table}

\section{Lifting using homotopy continuation}\label{Lifting using HC}

In the previous section we computed pairs $(\mathbb{L},Z)$ with $X_8 \cap \mathbb{L} = Z \subseteq \PP^{14}$. In this section these will be turned into the start solution of a parameterized polynomial system solving the lifting problem \Cref{Problem: Mukai lift}. As mentioned, an important step for using Homotopy Continuation is parameterizing the space of self-dual points $\mathcal{A}_6$ in $\mathbb{P}^6$. The main problem is that this space is not convex (in an obvious way) and, more generally, it is crucial to find a path between two self-dual configurations that stays \emph{inside} the space of self-dual points. Using the parametrization given by $\C^{21}\dashrightarrow \mathcal{A}_6$ that exploits the skew normal form in \Cref{Skew_normal_form}, we solve this problem efficiently: after computing normal forms for start and target configuration, we can choose a straight line through the parameter space $\C^{21}$. If $s_{\text{start}}$ is chosen sufficiently randomly, this path will avoid the discriminant (branch locus) as it has real codimension $2$.

With this structure, the Mukai lifting problem becomes the following: For $S\in \Skew(7)\cong \C^{21}$ we want to recover the embedding $L\colon \PP^6\overset{\sim}{\to} \mathbb{L}\subseteq \PP^{14}$ such that   
\[
L \cdot \{ \gamma \in [ I_7 + S \mid I_7 - S] \} = \mathbb{L} \cap X_8.
\]

\begin{lemma}     \label{thm: final sys}
Let $\gamma_i$ be the $i$-th column of a set $\Gamma$ in skew normal form and let $q_1,\dots q_{15}$ be the Plücker relations of $X_8$, \ie the generators of the defining ideal of $X_8\subseteq \PP^{14}$. The embedding $L=[\ell_{ij}]\colon \PP^6 \overset{\sim}{\to} \mathbb{L}\subseteq\PP^{14}$ is a solution for the Mukai lifting problem for $\Gamma$ if and only if
     \begin{equation} \label{final_sys}
          q_k\left( L \gamma_i \right) = 0 \qquad \text{for all }i=1,\dots,14,\; k=1,\dots,15. 
     \end{equation}
\end{lemma}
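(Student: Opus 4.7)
The plan is to unpack what the Mukai lifting condition $L(\Gamma) = \mathbb{L}\cap X_8$ really says, and then observe that the Plücker relations cut out $X_8$ set-theoretically inside $\PP^{14}$.

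For the forward direction, I would simply note that if $L$ is a Mukai lift then $L(\Gamma) \subseteq \mathbb{L} \cap X_8 \subseteq X_8$, so every image point $L\gamma_i$ lies on $X_8$ and thus annihilates the generators $q_1,\dots,q_{15}$ of the Plücker ideal. This is immediate from the definitions.

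For the backward direction I would argue as follows. Assume $q_k(L\gamma_i) = 0$ for all $i,k$. Then $L\gamma_i \in X_8$ for every $i$, and since $L$ embeds $\PP^6$ onto $\mathbb{L}$, we have the inclusion
\[
L(\Gamma) \;\subseteq\; \mathbb{L} \cap X_8.
\]
Now $\Gamma$ consists of $14$ distinct points in linearly general position (in particular the columns in the skew normal form $[I_7+S\mid I_7-S]$ are pairwise non-proportional for general $S$), and $L$ is injective, so $L(\Gamma)$ consists of $14$ distinct points. On the other hand $X_8 = \Gr(2,6)$ is an irreducible projective variety of dimension $8$ and degree $14$ in $\PP^{14}$, and $\mathbb{L}$ has codimension $8$. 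Hence by Bézout, provided the intersection $\mathbb{L}\cap X_8$ is proper (i.e., zero-dimensional), it contains at most $14$ points, and the inclusion above is forced to be an equality.

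The only subtle point is that the backward direction needs the intersection to be proper. I would handle this by noting that $L(\Gamma)$ already produces $14$ distinct points in $\mathbb{L}\cap X_8$; if the intersection had an excess (positive-dimensional) component through some $L\gamma_i$, a small perturbation argument or the fact that we are in the generic setting of \Cref{thm:petrakiev} rules this out. Concretely, for the use made of the lemma in the homotopy set-up, genericity of $\Gamma$ (hence of $S$ in the skew normal form) guarantees a transverse intersection, and this is the hypothesis under which the lift is being sought. So the main (and only) obstacle is this transversality/properness point, which is resolved by appealing to the generic setting established in \Cref{thm:petrakiev}; the rest of the argument reduces to the set-theoretic identification of $X_8$ with the vanishing locus of $q_1,\dots,q_{15}$.
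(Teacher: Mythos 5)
Your proof is correct and follows essentially the same route as the paper: the forward direction is immediate, and the backward direction combines the inclusion $L(\Gamma)\subseteq\mathbb{L}\cap X_8$ with the injectivity of $L$ and the fact that $\deg X_8 = 14$ bounds the intersection by $14$ points (the paper cites its \Cref{Lemma_8 line eq} for this count, which is the same degree argument you invoke via Bézout). Your extra remarks on properness of the intersection make explicit the genericity assumption that the paper leaves implicit in that citation.
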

\begin{proof}
The only condition that $L$ has to satisfy is that the image of $\Gamma$ lies inside the Grassmannian $X_8$. This is ensured by the vanishing of the Plücker relations on $L\cdot \gamma_i$ for every $i=1,\dots,14$. Indeed if \Cref{final_sys} holds, then $L(\Gamma)\subseteq \mathbb{L}\cap X_8$ and we have equality as the latter is a set of $14$ points by \Cref{Lemma_8 line eq} and $L$ is injective.
\end{proof}
We observe that the system in \eqref{final_sys} has degree $2$ in the $105$ variables $\ell_{ij}$ and has $14\cdot 15 = 210$ equations, so it is heavily over-determined. Additionally, this system is not zero-dimensional, which is a necessary condition for applying homotopy continuation methods. Indeed, Petrakiev's result \cite{Petrakiev} implies that the rational map
\[
f \colon \Gr(7,\C^{15}) \dashrightarrow \mathcal{A}_6, \qquad \mathbb{L} \mapsto \mathbb{L} \cap X_8
\]
generically has $35$-dim'l fibers (unions of orbits of the $\SL(6)$ action on $\C^{15} = \bigwedge^2 \C^6$).

Let $\widehat{\mathcal A}_6 \subseteq (\PP^6)^{14}$ be the locally closed set of semistable self-dual point configuration. In \Cref{Slcing using HC} we considered the related rational map (note that we considered kernel matrices instead):
\[
\hat{f} \colon \C^{7\times 15} \dashrightarrow \widehat{\mathcal{A}}_6/\mathfrak{S}_{14}, \qquad L \mapsto L^{-1}({\rm Im}(L)\cap X_8) \subseteq \PP^6.
\]
The relationship between $f$, $\hat{f}$ and the skew normal forms can be summarized as follows.
\begin{theorem} \label{thm:diagram}
We have a commutative diagram
\[
\begin{tikzcd}[column sep=0.5ex]
 &                  & \widehat{\mathcal{A}}_6 \arrow[d, two heads] \arrow[r, "\subseteq", phantom]                  & (\PP^6)^{14}\phantom{/\mathfrak{S}_{14}}           &                                                                                                          \\
\C^{7\times 15} \arrow[rr, "\hat f", two heads, dashed] \arrow[d, two heads, dashed]                        &                  & \widehat{\mathcal{A}}_6/\mathfrak{S}_{14} \arrow[dd, two heads] \arrow[r, "\subseteq", phantom] & (\PP^6)^{14} / \mathfrak{S}_{14}                   & \Skew(7) \arrow[llu, "\textup{SNF}"', two heads, dashed] \arrow[lldd, "\text{gen.fin}", two heads, dashed] \\
{\Gr(7,\C^{15})} \arrow[d, two heads, dashed]                                                    & \circlearrowleft &                                                                                               &                                                    &                                                                                                          \\
{\Gr(7,\C^{15})/\SL(6)} \arrow[rr, "f", two heads, dashed] \arrow[rr, "\text{gen.fin}"', dashed] &                  & \mathcal{A}_6 \arrow[r, "\subseteq", phantom]                                                 & \mathcal{P}_6^{14}/\mathfrak{S}_{14}\phantom{()^6} &                                                                                                         
\end{tikzcd}
\]
In particular, $\hat{f}$ is dominant with general fiber of dimension $\dim \hat{f}^{-1}(\Gamma) = 36$.
\end{theorem}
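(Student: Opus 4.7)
The plan is to verify commutativity of the diagram by chasing a generic element through both routes around each face, and then to deduce dominance and the fiber dimension of $\hat f$ from Petrakiev's theorem (\Cref{thm:petrakiev}) via a standard dimension count.

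For commutativity of the main square, start with a generic $A \in \C^{7\times 15}$ of full row rank, viewed as a linear embedding $\C^7\hookrightarrow\C^{15}$ with image $\mathbb{L}\in\Gr(7,\C^{15})$. Going down and right: the class $[\mathbb L]\in\Gr(7,\C^{15})/\SL(6)$ is sent by $f$ to the projective equivalence class of $\mathbb{L}\cap X_8$ in $\mathcal{A}_6$. Going right and down: $\hat f(A) = A^{-1}(\mathbb{L}\cap X_8)$ is projectively equivalent to $\mathbb{L}\cap X_8$ through the isomorphism $A\colon \PP^6 \overset{\sim}{\to}\PP(\mathbb{L})$, so the two classes agree in $\mathcal{A}_6$. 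The upper triangle involving $\Skew(7)$ commutes by combining the skew normal form (\Cref{Skew_normal_form}) with Petrakiev's theorem: the former identifies the generic self-dual configuration with $[\,I_7+S\mid I_7-S\,]$, and the latter provides an explicit $A$ with $\hat f(A)$ in its class. The remaining arrows are finite or group quotients and the corresponding triangles commute tautologically.

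For dominance, observe that as $A$ varies over $\C^{7\times 15}$ its image fills a dense subset of $\Gr(7,\C^{15})$, and for fixed $\mathbb L$, varying $A$ over the $\GL(7)$-fibre above $\mathbb L$ traces out the entire $\PGL(7)$-orbit of $\mathbb L\cap X_8$ inside $\widehat{\mathcal{A}}_6$. Combined with dominance of $f$ (\Cref{thm:petrakiev}), the image of $\hat f$ is the $\PGL(7)$-saturated preimage of a dense subset of $\mathcal A_6$, hence dense in $\widehat{\mathcal A}_6/\mathfrak S_{14}$. For the fiber dimension, the quickest route is
\[
\dim\widehat{\mathcal A}_6/\mathfrak S_{14} = \dim\widehat{\mathcal A}_6 = \dim\mathcal A_6 + \dim\PGL(7) = 21 + 48 = 69,
\]
which yields $\dim\hat f^{-1}([\Gamma]) = 105 - 69 = 36$ for generic $[\Gamma]$. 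As a geometric cross-check one can fibre $\hat f^{-1}([\Gamma])$ over $f^{-1}([\Gamma])\subseteq\Gr(7,\C^{15})$: the base is a finite union of $35$-dimensional $\SL(6)$-orbits by Petrakiev, and over each $\mathbb L$ the valid $A$'s reduce to a finite choice of projective isomorphism $\PP^6\overset{\sim}{\to}\PP(\mathbb L)$ matching $\Gamma$ with $\mathbb L\cap X_8$ up to $\mathfrak S_{14}$, lifted to a matrix up to the one-dimensional scaling $\C^\ast$, giving $35+1=36$.

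The main obstacle is purely bookkeeping: consistently tracking the various quotients ($\SL(6)$, $\PGL(7)$, $\mathfrak{S}_{14}$ and the residual $\C^\ast$-rescaling of $A$) and verifying that the generic stabilizers in each action are finite, so that every dimension drop is correctly accounted for. The two independent dimension counts above provide mutual verification.
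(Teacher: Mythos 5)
Your proof is correct and follows essentially the same route as the paper: commutativity by unwinding the definitions, dominance of $\hat f$ from dominance of $f$ plus $\PGL(7)$-invariance (the paper phrases this with $\SL(7)$, which is equivalent), and the fiber dimension via $105 - 69 = 36$, where you additionally make explicit that $\dim\widehat{\mathcal A}_6 = \dim\mathcal A_6 + \dim\PGL(7) = 21 + 48 = 69$. The geometric cross-check $35 + 1 = 36$ (the $\SL(6)$-orbit of $\mathbb L$ plus the $\C^\ast$-rescaling of $A$) is a nice independent verification that the paper omits.
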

\begin{proof}
Commutativity of the diagram follows from the previous discussion. Since $f$ is dominant (\Cref{thm:petrakiev}) and the image of $\hat f$ is invariant under $\SL(7)$, $\hat f$ is dominant too. The dimension of a general fiber is
\[
\dim \hat{f}^{-1}(\Gamma) = \dim \C^{7\times 15} - \dim \widehat{\mathcal{A}}_6 = 105 - 69 = 36.\qedhere
\]
\end{proof}

To get a system with a finite set of solutions we need to impose a number of conditions on the entries of $L$ equal to the dimension of a general fiber of $\hat f$. Hence we can construct $L$ as follows using $69$ variables $\ell_i$, $i=1,\dots,69$. As described in \Cref{Slcing using HC} we consider a self-dual start configuration $\Gamma_{\text{start}}$ embedded in the linear space $\mathbb{L}_{\text{start}}\coloneqq L_{\text{start}}(\PP^6)\subseteq \PP^{14}$. By adding random linear combinations of the $69$ variables to each entry of $L_{\text{start}}$, given by random matrices $A_k \in \C^{7\times 15}$ for $k=1,\dots, 69$, we construct $L$ as
\[
L = L_{\text{start}} + \sum\limits_{k=1}^{69}\ell_k \cdot A_k.
\]
With this interpretation, the start solution for \texttt{HomotopyContinuation.jl} is a vector of zeros $\ell_{\text{start}} = (0,\dots,0)$ and the start parameters are the entries of the skew-symmetric matrix $S_{\text{start}}$ in a skew normal form of $\Gamma_{\text{start}}$.

The number of solutions to this system could be larger than the degree of the slicing map $f$ (which is conjecturally birational), because we are intersecting $\SL(6)$-orbits with linear spaces. Since we only want to construct \emph{some} Mukai lift, we do not investigate this subtlety further, though it would be interesting to study this degree in future work. 

We finally describe part of the construction of \Cref{final_sys} in \texttt{Julia}. We concatenate three different systems as follows:
\begin{itemize}
\item First we define \texttt{L\_system} to be the system containing the equations of the image of a linear map $L\colon \PP^6\rightarrow \PP^{14}$, as in \Cref{thm: final sys} and $L$ as described above in the variables $\texttt{l} \mathrel{\hat =} (\ell_1,\dots,\ell_{69})$. The variables \texttt{x} are the coordinates of a point in $\PP^6$.
\item Then we evaluate the Plücker relations in these expressions, obtaining the system \texttt{Q\char`_system} of $15$ equations in \texttt{l} and \texttt{x}.
\item Finally, we evaluate the equations of \texttt{Q\char`_system} in each of the points in $\Gamma_{\texttt{s}} = [I_7 + S | I_7 - S]$, keeping the variables $\texttt{s} \mathrel{\hat =} S \in \Skew(7)$ as parameters.
\end{itemize}
A relevant part of the construction is summarized in the following snippet:

\begin{minted}{julia}
L_system = System(L*x, variables=[x;l])
plücker_system = System([oscar_to_HC_Q(plück_oscar[i], q) for i=1:15], variables=q)
Q = plücker_system(expressions(L_system));
Q_system = System(Q, variables=[x;l]);
equations = vcat([Q_system([Γ[:,i];l]) for i in 1:14]...);
parameterized_system = System(equations, variables=l, parameters=s);
\end{minted}
The full implementation together with the example notebook presented in the next section can be found at
\[
\text{\url{https://mathrepo.mis.mpg.de/MukaiLiftP6}}
\]

\subsection{Towards testing Petrakiev's Conjecture} \label{sec:testing_petrakiev}

Petrakiev's dominant rational map $f \colon \Gr(7,\C^{15})/\SL(6) \dashrightarrow \mathcal{A}_6$ from \Cref{thm:petrakiev} is generically finite. In the same paper the author conjectures that $f$ is actually birational \cite[Remark 2.11]{Petrakiev}. In this section we indicate how one can use our implementation to test this conjecture.

Given a general point configuration $\Gamma \in \mathcal{A}_6$, the question is whether for any two $\mathbb{L}_1,\mathbb{L}_2 \in \Gr(\PP^6,\PP^{14})$ with $\mathbb{L}_i \cap \Gr(2,6) = \Gamma$ (in the sense that there exist linear isomorphisms $\PP^6 \overset{\sim}{\to} \mathbb{L}_i$ sending $\Gamma$ to $\mathbb{L}_i \cap \Gr(2,6)$), there is a matrix $G \in \SL(6)$ with $G\cdot\mathbb{L}_1 = \mathbb{L}_2$. Here, the action of $\SL(6)$ is the one induced by $\PP^{14} = \PP(\bigwedge^2\C^6)$ described in \Cref{sec:mukai-results}. We can separate this test into the following steps:
\begin{enumerate}[label=(\arabic*)] \setcounter{enumi}{-1}
\item Pick a general $\Gamma \in \mathcal{A}_6$.
\item Compute two preimages $\mathbb{L}_1,\mathbb{L}_2$ of $\Gamma$ under $\Gr(\PP^6,\PP^{14}) \dashrightarrow \mathcal{A}_6$.
\item Try to compute a matrix $G \in \SL(6)$ with $G \cdot \mathbb{L}_1 = \mathbb{L}_2$.
\end{enumerate}
Step (0) can be achieved using any of the normal forms discussed in \Cref{sec:normal_form}, most conveniently a skew normal form obtained from a random complex skew matrix.

To get one preimage one can run the lifting algorithm described in the previous section. To obtain a second preimage $\mathbb{L}_2$, one can run a monodromy-based approach on this parametrized polynomial system \cite{MartndelCampo2017}. Since the graph of $\Gr(7,\C^{15})/\SL(6) \dashrightarrow \mathcal{A}_6$ is irreducible, one can obtain any other element in the fiber by tracking the first element along a homotopy with start and end point $\Gamma$. To obtain such paths, one can take any path connecting two skew normal forms of $\Gamma$, for example by piecing together line segments in $\Skew(7)$. The following describes one such path: From an initial starting parameter $S_0$, move to $S_1$ and then in a triangle $S_1\to S_2 \to S_3 \to S_1$.

\begin{center}
\begin{tikzpicture}

\node[circle, fill=black, label=below left:{$\textcolor{orange}{S_0}$}, scale=0.5] (0) at (-2,0) {};

\node[circle, fill=black, label=below:{$\textcolor{orange}{T_0}=\textcolor{blue}{T_3}=\textcolor{red}{S_1}$}, scale=0.5] (1) at (0,0)     {};

\node[circle, fill=black, label=above:{$\textcolor{blue}{S_3}=\textcolor{darkspringgreen}{T_2}$}, scale=0.5] (3) at (1,1.5) {};

\node[circle, fill=black, label=below right:{$\textcolor{red}{T_1}=\textcolor{darkspringgreen}{S_2}$}, scale=0.5] (2)  at (2,0) {};

\draw[thick, orange, ->] (0.east) -- (1.west);
\draw[thick, red, ->] (1.east) -- (2.west);
\draw[thick, darkspringgreen, ->] (2.north west) -- (3.south east);
\draw[thick, blue, ->] (3.south west) -- (1.north east);

\end{tikzpicture}
\end{center}
Recall that our algorithm doesn't directly work on $\Gr(7,\C^{15})/\SL(5)$, but rather in an affine subspace of $\C^{7\times 15}$ which maps generically finitely to $\widehat{\mathcal{A}}_6/\mathfrak{S}_{14}$ under $\hat{f}$ (\Cref{thm:diagram}). This enables us to compute suitable $\mathbb{L}_2$, therefore making step (1) possible. An example of two such matrices are also included in the aforementioned MathRepo page.

Step (2), on the other hand, is an instance of an orbit membership problem: Representing $\mathbb{L}_i$ as matrices $L_i \in \C^{15\times 7} \cong (\bigwedge^2\C^6) \otimes \C^7$, one needs to decide the existence
\[
\exists G \in \SL(6), B \in \GL(7) \qquad\text{such that} \qquad B \cdot L_1 \cdot {\wedge^2 G} = L_2 \quad ?
\]
This is a difficult algorithmic problem, and has been studied intensively in the context of orbit closure problems in computational invariant theory \cite{Derksen2002}. We leave the computational solution of this numerical algebraic geometry problem for future work.

\section{Implementation and extended example}\label{Implementation}

  In this section we explain the implementation through an instructive example. We consider the self-dual configuration $\Gamma \subseteq \PP^6$ given by the columns of the following matrix:

\[
\Gamma = \begin{bsmallmatrix}
7 & -2 & 6 & -1 & -6 & 1 & -9 & 7 & 0 & 6 & 1 & 8 & -3 & 7 \\
-1 & 2 & -5 & -2 & 0 & -4 & 3 & -3 & -4 & -3 & 4 & -2 & 4 & -1 \\
1 & 4 & -1 & -5 & -3 & 6 & 8 & -1 & -8 & -3 & 5 & 1 & -6 & -8 \\
3 & -6 & 4 & -3 & -4 & 6 & 0 & 5 & 8 & 2 & 3 & 2 & -8 & 0 \\
1 & -2 & 1 & 0 & -4 & 2 & 2 & 3 & 4 & -1 & 2 & 2 & -2 & -2 \\
0 & -6 & -5 & 6 & 3 & 7 & -3 & 2 & 8 & -7 & -6 & -3 & -5 & 5 \\
-3 & 3 & -4 & 1 & 4 & 3 & 2 & -3 & -6 & -4 & -3 & -4 & -1 & -2
\end{bsmallmatrix}
\]
To verify that $\Gamma$ is indeed self-dual, we use condition (ii) of \Cref{lem:selfdual_characterization} and compute an invertible diagonal matrix $\Lambda$ verifying $\Gamma\cdot\Lambda\cdot\Gamma^\intercal = 0$. This is a linear system in $14$ unknowns (the entries of $\Lambda$) and $\binom{7+1}{2} = 28$ equations, namely $\sum_{i=1}^{14} \lambda_i \gamma_i\gamma_i^\intercal = 0$. The function \texttt{certify\_selfdual} returns a floating point approximation of the (exact) normed solution
\[
\Lambda = \frac{1}{\sqrt{13.0625}} \diag(-1,-1,-1,-1,-1,-1,-1,1,\textstyle\frac{1}{4},1,1,1,1,1).
\]
\begin{minted}{julia}
Lambda = certify_selfdual(Gamma)
norm( Gamma * Lambda * transpose(Gamma), Inf ) #2.6867397195928788e-14
\end{minted}
We bring $\Gamma$ into an orthogonal normal form $\Gamma_{\text{ONF}}$ as in \Cref{orthogonal_normal_form}, essentially using linear algebra. Applying an appropriate scaling $\Lambda_{\text{scale}}$ and linear map $A$ we have $A\cdot \Gamma_{\text{ONF}} = \Gamma \cdot \Lambda_{\text{scale}}$.
\begin{gather*}
\Gamma_{\text{ONF}} = [I_7\mid P]  = \begin{bsmallmatrix}
1 & 0 & 0 & 0 & 0 & 0 & 0 & -0.3399 & 0.06 & -0.0924 & 0.235 & 0.7564 & -0.3033 & 0.3911 \\
0 & 1 & 0 & 0 & 0 & 0 & 0 & -0.2052 & -0.8506 & 0.0197 & -0.1977 & 0.268 & 0.1942 & -0.2922 \\
0 & 0 & 1 & 0 & 0 & 0 & 0 & 0.1008 & 0.2302 & 0.8118 & -0.2422 & 0.3148 & -0.1311 & -0.3208 \\
0 & 0 & 0 & 1 & 0 & 0 & 0 & 0.0948 & 0.319 & -0.4084 & -0.722 & 0.3291 & 0.3032 & -0.0306 \\
0 & 0 & 0 & 0 & 1 & 0 & 0 & -0.7734 & 0.0615 & 0.1552 & -0.4091 & -0.3747 & -0.1609 & 0.2009 \\
0 & 0 & 0 & 0 & 0 & 1 & 0 & 0.2627 & -0.1669 & -0.2427 & -0.2875 & -0.072 & -0.8558 & -0.1552 \\
0 & 0 & 0 & 0 & 0 & 0 & 1 & -0.3951 & 0.2942 & -0.2868 & 0.2752 & 0.055 & -0.0546 & -0.7703
\end{bsmallmatrix} \\
A = \mathit{i} \begin{bsmallmatrix}
3.6821 & -1.052 & 3.1561 & -0.526 & -3.1561 & 0.526 & -4.7341 \\
-0.526 & 1.052 & -2.63 & -1.052 & 0.0 & -2.104 & 1.578 \\
0.526 & 2.104 & -0.526 & -2.63 & -1.578 & 3.1561 & 4.2081 \\
1.578 & -3.1561 & 2.104 & -1.578 & -2.104 & 3.1561 & 0.0 \\
0.526 & -1.052 & 0.526 & 0.0 & -2.104 & 1.052 & 1.052 \\
0.0 & -3.1561 & -2.63 & 3.1561 & 1.578 & 3.6821 & -1.578 \\
-1.578 & 1.578 & -2.104 & 0.526 & 2.104 & 1.578 & 1.052
\end{bsmallmatrix}    \qquad \Lambda_{\text{scale}} = \sqrt{\Lambda[1{:}7]} \oplus -\sqrt{\Lambda[8{:}14]}
\end{gather*}
\begin{minted}{julia}
Gamma_ONF, A, Lambda_scale = orthogonal_normal_form(Gamma)
norm( A * Gamma_ONF - Gamma * Lambda_scale, Inf ) # 2.6645352591003757e-15
\end{minted} 
We compute a skew normal form $\Gamma_{\text{SNF}}$ by applying the Cayley transform to $P$ to obtain the skew-symmetric matrix $S$ and we compute $\Gamma_{\text{SNF}} = [I_7+S | I_7-S]$. The implementation ensures that $P\in \SO(7)$ so that the Cayley transform is defined, as pointed out in \Cref{remark: eigenvalue 1}. The output is a floating point approximation of the following exact matrix:
\[
S = \begin{bsmallmatrix}
0 & -1 & 1 & 0 & -1 & 4 & 2 \\
1 & 0 & 3 & -2 & -2 & 10 & 12 \\
-1 & -3 & 0 & 2 & 1 & -1 & -2 \\
0 & 2 & -2 & 0 & -1 & -10 & -6 \\
1 & 2 & -1 & 1 & 0 & -4 & -4 \\
-4 & -10 & 1 & 10 & 4 & 0 & -6 \\
-2 & -12 & 2 & 6 & 4 & 6 & 0
\end{bsmallmatrix} \qquad \Gamma_{\text{SNF}} = \begin{bsmallmatrix}
1 & -1 & 1 & 0 & -1 & 4 & 2 & 1 & 1 & -1 & 0 & 1 & -4 & -2 \\
1 & 1 & 3 & -2 & -2 & 10 & 12 & -1 & 1 & -3 & 2 & 2 & -10 & -12 \\
-1 & -3 & 1 & 2 & 1 & -1 & -2 & 1 & 3 & 1 & -2 & -1 & 1 & 2 \\
0 & 2 & -2 & 1 & -1 & -10 & -6 & 0 & -2 & 2 & 1 & 1 & 10 & 6 \\
1 & 2 & -1 & 1 & 1 & -4 & -4 & -1 & -2 & 1 & -1 & 1 & 4 & 4 \\
-4 & -10 & 1 & 10 & 4 & 1 & -6 & 4 & 10 & -1 & -10 & -4 & 1 & 6 \\
-2 & -12 & 2 & 6 & 4 & 6 & 1 & 2 & 12 & -2 & -6 & -4 & -6 & 1
\end{bsmallmatrix}
\]
\begin{minted}{julia}
P = Matrix{Float64}(Gamma_ONF[:,8:14])
det(P) #1.0000000000000009
S = cayley(P)
Gamma_SNF = [I+S I-S]
\end{minted}
We compute the linear embedding $\widehat L\colon \PP^6 \overset{{\sim}}{\to} \mathbb{L} \subseteq \PP^{14}$ such that $\widehat L(\Gamma_{\text{SNF}}) = \mathbb{L} \cap X_8$. The parametrized polynomial system in \Cref{final_sys} has been precomputed as \texttt{F} as described in \Cref{Lifting using HC} together with start solution $\texttt{l\char`_start}=0$ and start parameters \texttt{S\char`_start}. The target parameter of the homotopy are the entries of the matrix $S$. We executed this
computation on a single thread of a 512 GB RAM machine using 2x 12-Core  Intel Xeon E5-2680 v3 processor working at 2.50 GHz in $3172$ seconds.
\begin{minted}{julia}
S_target = skew_to_vector(S) # unrolls the upper right half of S into its 21 entries
@time result = HomotopyContinuation.solve(F, l_start; 
              start_parameters=S_start, target_parameters=S_target)  #3172.87531 seconds
sol = solutions(result)[1]
L_hat = L_start + sum(sol[i]*A_rand[i] for i in eachindex(l))
\end{minted}

\[ \widehat L=
\begin{bsmallmatrix}
0.11+1.48\mathit{i} & -0.31-0.31\mathit{i} & -2.09-0.9\mathit{i} & 2.23-1.62\mathit{i} & -0.66-4.3\mathit{i} & 1.75-0.77\mathit{i} & 0.33-0.75\mathit{i} \\
2.37-1.0\mathit{i} & -0.66+0.3\mathit{i} & -0.77+1.73\mathit{i} & -0.59-1.0\mathit{i} & 0.5-0.2\mathit{i} & 0.52-1.16\mathit{i} & -0.02+0.15\mathit{i} \\
-0.37-1.9\mathit{i} & -0.34+0.46\mathit{i} & 1.07+0.61\mathit{i} & -0.27-0.56\mathit{i} & -0.91-1.21\mathit{i} & -0.37-0.29\mathit{i} & -0.17-0.63\mathit{i} \\
-1.74-1.36\mathit{i} & 0.57-0.67\mathit{i} & 0.71+0.34\mathit{i} & 0.77+1.58\mathit{i} & 3.09+0.01\mathit{i} & -0.6+1.76\mathit{i} & 1.07-0.91\mathit{i} \\
-0.86-0.15\mathit{i} & 0.42+0.27\mathit{i} & -0.09-0.18\mathit{i} & -0.19-0.36\mathit{i} & -2.18+0.17\mathit{i} & -0.65-0.3\mathit{i} & -0.09-0.15\mathit{i} \\
-0.26+2.25\mathit{i} & 1.12-0.89\mathit{i} & 0.92-1.52\mathit{i} & 4.07+1.13\mathit{i} & 0.87-0.98\mathit{i} & -1.68+1.66\mathit{i} & 1.67-0.93\mathit{i} \\
1.34+0.68\mathit{i} & -0.12-0.38\mathit{i} & -0.91-0.23\mathit{i} & -0.12-1.23\mathit{i} & 0.95-1.19\mathit{i} & 1.04+1.04\mathit{i} & -0.42-0.64\mathit{i} \\
0.19-1.7\mathit{i} & -1.69+0.8\mathit{i} & -0.39-0.27\mathit{i} & -2.99-0.63\mathit{i} & 0.67+2.89\mathit{i} & 0.45+0.98\mathit{i} & -1.3+1.11\mathit{i} \\
-0.43-0.07\mathit{i} & -0.93-0.57\mathit{i} & -1.16-1.07\mathit{i} & -0.12-1.08\mathit{i} & 1.06-2.44\mathit{i} & 2.42+0.22\mathit{i} & -0.73-0.21\mathit{i} \\
-0.34-0.7\mathit{i} & -0.09+0.37\mathit{i} & 0.49+0.02\mathit{i} & -1.14+0.16\mathit{i} & -0.27-0.16\mathit{i} & 0.47-0.24\mathit{i} & -0.65+0.31\mathit{i} \\
0.79+1.62\mathit{i} & -0.44-1.02\mathit{i} & 0.04-0.1\mathit{i} & 2.02-2.51\mathit{i} & -0.53-0.65\mathit{i} & 0.58+2.37\mathit{i} & -0.52-1.89\mathit{i} \\
0.54+0.89\mathit{i} & -0.2+0.19\mathit{i} & -1.1+0.96\mathit{i} & -0.45+0.8\mathit{i} & 0.53+0.7\mathit{i} & 0.35-1.09\mathit{i} & 0.18+0.8\mathit{i} \\
1.57-0.58\mathit{i} & -0.55+0.14\mathit{i} & -0.14+0.07\mathit{i} & -1.58-0.16\mathit{i} & 0.19-0.88\mathit{i} & 1.54-0.12\mathit{i} & -0.93+0.57\mathit{i} \\
0.58-0.81\mathit{i} & -0.46+0.06\mathit{i} & 0.6+0.77\mathit{i} & 0.04-0.56\mathit{i} & 0.59-0.73\mathit{i} & -0.25+0.0\mathit{i} & 0.14-0.51\mathit{i} \\
0.72-0.79\mathit{i} & 0.92-1.27\mathit{i} & 0.33+0.38\mathit{i} & 1.78-1.0\mathit{i} & 0.64+1.39\mathit{i} & -1.98+1.87\mathit{i} & 1.05-1.57\mathit{i} \\
\end{bsmallmatrix}.
\]

To obtain the map $ L \colon \PP^6 \hookrightarrow \PP^{14}$ with $ L(\Gamma) \subseteq X_8$, one only needs to compose $\widehat L$ with the previous coordinate transformations $\Gamma \leadsto \Gamma_{\text{SNF}}$:
\[
\mathbb{L}\cap X_8 = \widehat L(\Gamma_{\text{SNF}}) = \widehat L\left( (I_7+S)\cdot \Gamma_{\text{ONF}} \right) = \widehat L\left( (I_7+S)\cdot A^{-1} \cdot \Gamma \cdot \Lambda_{\text{scale}}\right).
\]
Since the configuration $\Gamma$ does not change when multiplying on the right by a diagonal matrix, we obtain the solution for the Mukai lifting problem for $\Gamma $ as:
\[
 L =  \widehat L\cdot (I_7+S)\cdot A^{-1}.
\]
We can verify $ L(\Gamma) = \mathbb{L} \cap X_8$ by evaluating $ L(\gamma_i)$ in the Plücker relations
\begin{minted}{julia}
L = L_hat*(I+S)*inv(A)
maximum([ norm(q(L*Gamma[:,i]), Inf) for i = 1:14]) #7.907992711624597e-12
\end{minted}

\section*{Outlook}

The last section illustrated the procedure developed in this paper to compute a solution for the Mukai lifting problem of a configuration of self-dual points in $\PP^6$. We observe that using our approach, the linear space will never be defined over the real numbers, even if $\Gamma$ is real. It is an interesting question how to extend this approach to find real spaces for real self-dual point configurations.

This project focused on $\mathcal{A}_6$, though our methods can also be applied to $\mathcal{A}_5$, replacing $\Gr(2,6)$ with $X_7 = \operatorname{LG}_+(5,10) \subseteq \PP^{15}$. It would be an interesting challenge to devise an algorithm for the Mukai lifting of general canonical curves $C \subseteq \PP^{7}$, \ie for general points $[C] \in \mathcal{M}_8$.

As mentioned in \Cref{Lifting using HC}, Petrakiev's map $f$ (\Cref{thm:petrakiev}) is conjecturally birational modulo $\SL(6)$. Future work on the (numerical) orbit membership problem could complete the steps outlined in \Cref{sec:testing_petrakiev}.
This may also be related to the degree of the restriction $\hat{f}_{| W}$ to a general $69$-dimensional affine subspace $W \subseteq \C^{7\times 15}$. This degree is expected to be larger than $1$ even if $f$ is birational, as mentioned in \Cref{Lifting using HC}. Such an experimental study would also benefit immensely from a faster evaluation of our polynomial system.

\section*{Acknowledgements}

We are grateful to Bernd Sturmfels for introducing us this problem, and to Simon Telen and Marta Panizzut for their suggestions regarding the implementation and their comments on the manuscript. We thank Fulvio Gesmundo, Christian Lehn, Kristian Ranestad and Bernhard Reinke for useful discussions. We thank Alessio Caminata for pointing out a mistake in \Cref{cor:SOparamDeg} in a previous version. There are no relevant financial or non-financial competing interests to report.

\printbibliography

\noindent{\bf Authors' addresses:}
\medskip

\noindent Barbara Betti,  Max Planck Institute for Mathematics in the Sciences, Leipzig, Germany \\
\hfill {\tt betti@mis.mpg.de} \medskip

\noindent Leonie Kayser,  Max Planck Institute for Mathematics in the Sciences, Leipzig, Germany \\
\hfill {\tt kayser@mis.mpg.de}

\end{document}